\def\@seccntformat#1{\csname the#1\endcsname. } 
\def\@biblabel#1{#1.}
\title{On the prime spectrum of an automorphism group of an   ${\rm AT4}(p,p+2,r)$-graph}
\author{L. Yu. Tsiovkina} 
\address{N.N.~Krasovskii Institute of Mathematics and Mechanics of 
the Ural Branch of the  Russian Academy of Sciences, 
16 S.~Kovalevskaya str., Yekaterinburg, 620108 Russia}
\email{l.tsiovkina@gmail.com}
\date{}
\theoremstyle{plain}
\newtheorem{theorem}{Theorem}
\newtheorem{lemma}{Lemma}
\newtheorem{corollary}{Corollary}
\newtheorem{proposition}{Proposition}
\begin{document}

\vspace{\baselineskip}
\vspace{\baselineskip}

\vspace{\baselineskip}

\vspace{\baselineskip}

\begin{abstract}
This paper is devoted to the problem of classification of   ${\rm AT4}(p,p+2,r)$-graphs.
There is a unique ${\rm AT4}(p,p+2,r)$-graph with $p=2$, namely, the distance-transitive Soicher graph with intersection array $\{56, 45, 16, 1;1, 8,  45, 56\}$, whose local graphs are isomorphic to the Gewirtz graph.  
It is still unknown whether an  ${\rm AT4}(p,p+2,r)$-graph with $p>2$ exists. 
The local graphs of each ${\rm AT4}(p,p+2,r)$-graph are strongly regular with parameters $((p+2)(p^2+4p+2),p(p+3),p-2,p)$.
In the present paper, we find an upper bound for the prime spectrum of an automorphism group of a strongly regular graph with such parameters, and we also obtain some restrictions for the prime spectrum and the structure of 
an automorphism group of an ${\rm AT4}(p,p+2,r)$-graph in case, when  $p$ is a prime power.
As a corollary, we show that there are no arc-transitive  ${\rm AT4}(p,p+2,r)$\--graphs with  $p\in \{11,17,27\}$.
\\
\\
\textbf{Keywords}: antipodal tight graph, strongly regular graph, automorphism group, prime spectrum.
\\
\textbf{MSC}: Primary 05E20; Secondary 05C25, 05E30.
\end{abstract}

\maketitle

 \section*{Introduction}

Let  $\Gamma$ be a non-bipartite antipodal distance-regular graph of diameter 4 with eigenvalues $\theta_0>\theta_1>...>\theta_4$.  
In \cite{JKT}, J.~Koolen, A.~Juri\v{s}i\'{c} and P. Terwilliger proved that  parameters  $b_0,b_1$ and $a_1$
of $\Gamma$ satisfy the so-called {\it fundamental bound}:
$$(\theta_1+\frac{b_0}{a_1+1})(\theta_4+\frac{b_0}{a_1+1})\ge -\frac{b_0a_1b_1}{(a_1+1)^2}.$$
If it holds with equality, then the local  graphs of $\Gamma$ are strongly regular, 
and the intersection array
of $\Gamma$  is expressed in terms of the non-principal eigenvalues $p= -1-b_1/(1+\theta_4)$ and $-q=-1-b_1/(1+\theta_1)$ of the local graphs  and
the antipodality index $r$ of $\Gamma$. In this case, $\Gamma$  is called an {\it antipodal tight graph} of diameter 4 with parameters  $(p,q,r)$ or simply an {\it ${\rm AT4}(p,q,r)$-graph}.

 The study of the class of ${\rm AT4}(p,q,r)$-graphs is motivated by the fact that it contains 11 of all known 14
non-bipartite antipodal distance-regular graphs of diameter $4$, including the famous antipodal triple cover 
of the 3-transposition graph of the sporadic Fischer group ${Fi_{24}}'$  (see \cite{JKT}). 
Most  examples   are related to some important permutation representations 
of various quasisimple groups or have pseudo-geometric local graphs.

In this paper, we consider the problem of classification of  ${\rm AT4}(p,p+2,r)$-graphs. 
It is known that the local graphs of an ${\rm AT4}(p,p+2,r)$-graph are strongly regular with parameteres 
$$((p+2)(p^2+4p+2),p(p+3),p-2,p).$$ Besides,   by a theorem of Juri\v{s}i\'{c} (see \cite[Theorem 5.5]{Jur}), 
each second subconstituent  of an ${\rm AT4}(p,p+2,r)$-graph is again a non-bipartite antipodal distance-regular
 graph of diameter $4$. Thus, the family of ${\rm AT4}(p,p+2,r)$-graphs is a potentially rich source of
new examples of non-bipartite antipodal distance-regular graphs of diameter $4$.

 By a Brouwer's result (see \cite[Theorem 11.4.6]{BCNcorr}), there is a  unique 
 ${\rm AT4}(p,p+2,r)$-graph with  $p=2$, namely, it is the first Soicher graph
  (with intersection array $\{56, 45, 16, 1;1, 8,  45,\\
   56\}$). 
In \cite{Soi} it is shown that this   graph admits an arc-transitive group of automorphisms, 
which is isomorphic to the quasisimple group $3_2.U_4(3)$ (in the  notation of~\cite{atlas}).
To date,   no examples of  ${\rm AT4}(p,p+2,r)$-graphs  with  $p>2$ are known. 
A natural  task in classification of  ${\rm AT4}(p,p+2,r)$-graphs is to describe those   
ones, which possess arc-transitive groups of automorphisms (and, therefore, can be constructed 
as orbital graphs of some permutation groups). One approach to the problem of classification of 
arc-transitive ${\rm AT4}(p,p+2,r)$-graphs involves a necessity to determine the prime spectrum
of an automorphism group of such a graph. 

In recent papers \cite{Tsi19, Tsi18b}, it was noticed that, for $p\in\{5,7\}$, the prime spectrum of an  
automorphism group of an ${\rm AT4}(p,p+2,r)$-graph
  depends essentially on the structure    of its local graphs. 
Here we generalize this observation to the case, when   $p$ is  a prime power.
 
\begin{theorem}\label{T1} Let $\Gamma$ be an ${\rm AT4}(p,p+2,r)$-graph, let  $\{a,b\}$ 
be an edge of $\Gamma$ and $G={\rm Aut}(\Gamma)$. Then $G_a$ acts faithfully both on  $\Gamma_1(a)$ and on $\Gamma_2(a)$, and if  $p$  is  a prime power, $p>2$,   then  $\pi(G_{a,b})\subseteq \{2,3,...,p\}$.
\end{theorem}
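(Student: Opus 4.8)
\emph{Overview.} The plan is to handle the three assertions in turn, reducing the prime-spectrum statement to a question about a vertex stabiliser in the automorphism group of the local strongly regular graph $\Sigma:=\Gamma_1(a)$, whose parameters $((p+2)(p^2+4p+2),p(p+3),p-2,p)$ and eigenvalue data I will use freely. Writing out $b_0=(p+2)(p^2+4p+2)$, $a_1=p(p+3)$ and $b_1=(p+1)^2(p+3)$, the graph $\Sigma$ has valency $k'=p(p+3)$, parameters $\lambda'=p-2$, $\mu'=p$, and (besides $k'$) eigenvalues $p$ and $-(p+2)$ with multiplicities $m_+=\tfrac12(p+3)(p^2+4p+2)$ and $m_-=\tfrac12 p(p+2)(p+3)$.

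\emph{Faithfulness.} Let $K$ be the kernel of $G_a$ on $\Gamma_1(a)$, so $K$ fixes $a$ and every neighbour of $a$. Since $0<\mu'<k'$, the graph $\Sigma$ is a connected (primitive) strongly regular graph. I would show that fixing the closed neighbourhood of $a$ pointwise forces the identity by propagating fixity along edges: for $u\in\Gamma_1(a)$ the neighbours of $u$ are $a$, the $a_1=p(p+3)$ common neighbours of $a$ and $u$ (all lying in $\Gamma_1(a)$, hence fixed), and the $b_1=(p+1)^2(p+3)$ neighbours of $u$ in $\Gamma_2(a)$; each $w$ of the latter is pinned down by the fixed set $\Gamma_1(a)\cap\Gamma_1(w)$ of size $c_2$, so $K$ fixes $\Gamma_1(u)$ as well, and connectedness of $\Gamma$ then gives $K=1$. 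The only substantive input is the $c_2$-uniqueness, i.e. injectivity of $w\mapsto\Gamma_1(a)\cap\Gamma_1(w)$ on $\Gamma_2(a)$, which I would verify from the intersection numbers. For $\Gamma_2(a)$ I would invoke Juri\v{s}i\'c's theorem (quoted above) that the second subconstituent $\Delta=\Gamma_2(a)$ is itself a connected antipodal distance-regular graph of diameter $4$: the kernel of $G_a$ on $\Delta$ fixes $\Delta$ pointwise, each $u\in\Gamma_1(a)$ has its $b_1>0$ neighbours in $\Delta$ fixed, and a dual pinning shows $u$ is fixed, whence this kernel lies in $K=1$.

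\emph{Reduction.} By the faithfulness just established, $G_a$ embeds into $\mathrm{Aut}(\Sigma)$, so $G_{a,b}\le (G_a)_b$ embeds into the vertex stabiliser $\mathrm{Aut}(\Sigma)_b$. The set $\Sigma\setminus\{b\}$ splits into $\Sigma_1(b)$, the $k'=p(p+3)$ common neighbours of $a$ and $b$, inducing a regular graph of valency $\lambda'=p-2$, and $\Sigma_2(b)$ of size $(p+1)^2(p+3)$, inducing a regular graph of valency $k'-\mu'=p(p+2)$. It therefore suffices to show that no prime $s>p$ divides $|\mathrm{Aut}(\Sigma)_b|$.

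\emph{The prime bound and the main obstacle.} Fix a prime $s>p$ and an element $g$ of order $s$ fixing $b$. Since $s>p-2$ exceeds the valency of $\Sigma_1(b)$, once $g$ fixes one vertex there it fixes its whole connected component of $\Sigma_1(b)$; hence $\mathrm{Fix}(g)\cap\Sigma_1(b)$ is a union of components, and the fixed-point count satisfies $f_1\equiv p(p+3)\pmod s$. If $f_1=0$ then $s\mid p(p+3)$, and as $s$ is a prime exceeding $p$ this forces $s\mid(p+3)$, so $s=p+3$. Otherwise $g$ fixes a common neighbour $x$ of $a,b$, and I would iterate the propagation, now bringing in the trace/multiplicity relation $f=1+(a_0-a_1)+(b_0-b_1)$ with $m_+=a_0+(s-1)a_1$ and $m_-=b_0+(s-1)b_1$, to bound the number of $g$-fixed vertices of $\Sigma$ and confront it with an eigenvalue-interlacing bound for $\mathrm{Fix}(g)$. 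The hard part is exactly the borderline primes $s\in\{p+1,p+2,p+3\}$: a congruence count alone is consistent there (for instance when $s=p+2$ one computes $f_1\equiv p$ and, trivially, $f\equiv 0\pmod s$), so excluding them requires the finer arithmetic of $m_\pm$, of $p(p+3)$ and of $(p+1)^2(p+3)$ modulo $s$, and it is here that the prime-power hypothesis on $p$ enters, controlling the factorisations that occur in these multiplicities and in the subconstituent sizes and allowing each residual prime $s>p$ to be ruled out.
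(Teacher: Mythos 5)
Your reduction step (faithfulness gives $G_{a,b}\hookrightarrow {\rm Aut}(\Theta)_b$ for the local strongly regular graph $\Theta=\Gamma_1(a)$) matches the paper, but both halves of your argument have genuine gaps. For faithfulness, everything rests on the asserted ``$c_2$-uniqueness'', i.e.\ injectivity of $w\mapsto\Gamma_1(a)\cap\Gamma_1(w)$ on $\Gamma_2(a)$, which you do not prove and which is not a routine consequence of the intersection numbers: if $g$ fixes $\{a\}\cup\Gamma_1(a)$ pointwise and $w^g\ne w$ for some $w\in\Gamma_2(a)$, then $w$ and $w^g$ share the $\mu$-graph $M=\Gamma_1(a)\cap\Gamma_1(w)$ and $\Gamma_1(w)\cap\Gamma_1(w^g)=M$, a configuration consistent with all the parameters, so no counting argument rules it out. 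The paper's Lemma 7 uses a mechanism absent from your proposal: an element of $G_a$ trivial on $\Gamma_i(a)$ induces on the antipodal quotient $\bar\Gamma$ an automorphism fixing $\bar\Gamma_i(\bar a)$ pointwise; the Behbahani--Lam fixed-point bound applied to the strongly regular graphs $\bar\Gamma$ and $\bar\Gamma_2(\bar a)$ shows that no nontrivial prime-order automorphism of $\bar\Gamma$ can fix such a subconstituent pointwise, so the element lies in the kernel $K$ of the action on antipodal classes; finally $G_a\cap K=1$ by an elementary antipodality argument (fixing $a$ and all classes forces $\Gamma_1(a)\subseteq{\rm Fix}(g)$, and then an unfixed $b\in\Gamma_2(a)$ would satisfy both $d(b,b^g)\le 2$ and $d(b,b^g)=4$). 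Note also that the faithfulness assertion carries no prime-power hypothesis, so your route must work for all $p$.

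For the prime bound, you concede the argument is open at exactly the decisive point: the primes $s\in\{p+1,p+2,p+3\}$ and the primes dividing $(p+2)^2-2$ survive your congruence and interlacing counts, and you offer only the hope that ``finer arithmetic'' will remove them. The paper's Lemma 3 closes all of these at once with an idea your sketch does not contain: if $s=|g|>p$ is prime and $g$ fixes a vertex of $\Theta$, then $s$ exceeds both $\lambda_\Theta=p-2$ and $\mu_\Theta=p$, so the common-neighbour set of any two fixed vertices is a $g$-invariant set of size less than $s$ and hence is fixed pointwise; consequently ${\rm Fix}(g)$ is itself a strongly regular graph with parameters $(v',k',p-2,p)$. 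An arithmetic classification of such subgraphs when $p$ is a prime power (the paper's Lemma 2 --- this is where the hypothesis actually enters) leaves only candidates with $v'>(p+2)^2-2$ or failing a congruence, contradicting the Behbahani--Lam bound $|{\rm Fix}(g)|\le p\cdot v/(k-p)=(p+2)^2-2$. Your localized version of this observation (fixity propagating along components of $\Sigma_1(b)$ because $s>p-2$) points in the right direction, but without the global ``fixed subgraph is strongly regular'' step and the ensuing classification, the borderline primes are never eliminated, so the stated inclusion $\pi(G_{a,b})\subseteq\{2,3,\dots,p\}$ is not established.
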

\begin{corollary}\label{C1}Suppose that $\Gamma$ is an  ${\rm AT4}(p,p+2,r)$-graph, where $p$ is a prime  power, $p>2$, $a\in \Gamma$ and the group $G={\rm Aut}(\Gamma)$ acts transitively on arcs of $\Gamma$. Then
\begin{equation*}
\pi((p+2)(p^2+4p+2)(p+1)(p+4))\subseteq\pi(G)\newline
\subseteq \{2,3,...,p+2\}\cup \pi((p^2+4p+2)(p+4)).
\end{equation*}
In addition, if the numbers $p+2$ and $p^2+4p+2$ are prime, then the group $G_a$ is almost simple.
\end{corollary}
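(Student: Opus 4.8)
The plan is to prove the theorem first and then derive the corollary, since the corollary statement follows by combining the theorem with elementary counting and divisibility arguments on the intersection array.

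For the theorem, I would proceed in three stages. First I would establish the faithful action of $G_a$ on the local graph $\Gamma_1(a)$. The standard tool here is that an antipodal tight graph of diameter $4$ is locally strongly regular and, being antipodal, has a well-behaved antipodal quotient; the key point is that the pointwise stabilizer of $\Gamma_1(a)$ in $G_a$ must also fix $a$ and act trivially on the remaining distance layers. I would argue that if $g\in G_a$ fixes $\Gamma_1(a)$ pointwise, then since any vertex at distance $2$ from $a$ has neighbours in $\Gamma_1(a)$ (because $c_2 \ge 1$ and the graph is distance-regular), $g$ fixes enough of $\Gamma_2(a)$ to force $g=1$; connectivity then propagates this to the whole graph. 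The faithful action on $\Gamma_2(a)$ I would obtain from Juri\v{s}i\'{c}'s theorem quoted in the excerpt: the second subconstituent is itself a non-bipartite antipodal distance-regular graph of diameter $4$, so the analogous rigidity argument applies there.

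The substantive part is the bound $\pi(G_{a,b})\subseteq\{2,3,\dots,p\}$ when $p$ is a prime power. Here $G_{a,b}$ acts on the local graph $\Delta = \Gamma_1(a)$, which is strongly regular with the stated parameters, fixing the vertex $b$; equivalently $G_{a,b}$ embeds in the point stabilizer of an automorphism group of a strongly regular graph with parameters $((p+2)(p^2+4p+2),\,p(p+3),\,p-2,\,p)$. The main idea would be to bound the primes dividing $|G_{a,b}|$ by examining how a prime $s$ can act on $\Delta$: for $s$ to divide $|G_{a,b}|$ while $s > p$, one would need an element of order $s$ acting on the neighbourhood and second neighbourhood of $b$ inside $\Delta$, whose sizes are $p(p+3)$ and the co-valency respectively. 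The hard part will be ruling out large primes $s$ with $p < s \le p+2$ or dividing $p^2+4p+2$ etc.; this is exactly where the hypothesis that $p$ is a \emph{prime power} enters, presumably to control the Sylow structure and force any such $s$-element to act trivially on a substructure it should move, giving a contradiction via the faithfulness already established. I expect this divisibility/fixed-point analysis of a prime-order automorphism on the strongly regular local graph to be the principal obstacle.

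For the corollary, with $G$ arc-transitive I would use the orbit–counting identity $|G| = |\Gamma|\cdot|G_a| = |\Gamma|\cdot|\Gamma_1(a)|\cdot|G_{a,b}|/(\text{arc count factor})$, so that $|G|$ is divisible by the number of vertices and by the valency, yielding the lower bound $\pi((p+2)(p^2+4p+2)(p+1)(p+4))\subseteq\pi(G)$ after factoring the relevant parameters (the valency $k=(p+1)(p^2+4p+2)\cdots$ and the order of the graph contribute these primes). The upper bound combines $\pi(G_{a,b})\subseteq\{2,\dots,p\}$ from Theorem~\ref{T1} with the primes arising from $|G:G_a|=|\Gamma|$ and $|G_a:G_{a,b}|=|\Gamma_1(a)|$, whose factorizations introduce at most the primes dividing $(p+2)(p^2+4p+2)(p+4)$ together with $\{2,\dots,p+2\}$. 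Finally, under the primality of $p+2$ and $p^2+4p+2$, I would invoke the classification-flavoured argument that $G_a$, acting primitively and faithfully on the strongly regular local graph with a socle constrained by these primes, admits no non-trivial abelian normal subgroup, hence is almost simple; the delicate step is excluding an affine or product-type socle, which the primality of the two parameters is designed to preclude.
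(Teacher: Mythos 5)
Your derivation of the two displayed inclusions is correct and is essentially the paper's own argument: under arc-transitivity $|G:G_{a,b}|$ equals the number of arcs, which by Proposition~\ref{P1} is $(p+2)(p^2+4p+2)(p+1)^2(p+4)^2r/2$, so $\pi(G)=\pi(G_{a,b})\cup\pi(|G:G_{a,b}|)$, and combining this with $\pi(G_{a,b})\subseteq\{2,\dots,p\}$ from Theorem~\ref{T1} and with the fact that $r$ divides $2(p+1)$ gives both containments exactly as in the paper.

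The genuine gap is the final assertion, that $G_a$ is almost simple when $p+2$ and $p^2+4p+2$ are primes. Two things go wrong in your sketch. First, you assume $G_a$ acts \emph{primitively} on the local graph; arc-transitivity only gives transitivity of $G_a$ on $\Gamma_1(a)$, and neither you nor the paper establishes (or uses) primitivity. Second, the inference ``no non-trivial abelian normal subgroup, hence almost simple'' is invalid on its face --- the socle could be a direct product of several non-abelian simple groups --- and you explicitly defer exactly this point (``excluding an affine or product-type socle'') to the primality hypothesis without supplying any mechanism. That exclusion is the entire content of the paper's Proposition~\ref{P2}, and it is substantive group theory, not bookkeeping: writing $\{s_1,s_2\}=\{p+2,\,p^2+4p+2\}$, one shows (i) $G_a$ is non-solvable, since a solvable vertex-transitive automorphism group of the local strongly regular graph is impossible here by Lemma~\ref{L6} (both of its alternatives force $p+2$ to be a power of $3$ or composite); (ii) the solvable radical $S(G_a)$ is trivial: otherwise its orbits have length $s_1$ and it contains an element $f$ of order $s_1$, and for any $g\in G_a$ of order $s_2$ the subgroup $\langle f,g\rangle$ is solvable by the theorem of Guralnick, Kunyavski\v{\i}, Plotkin and Shalev \cite{GKPS}, hence has a Hall $\{s_1,s_2\}$-subgroup, which by the divisibility restrictions of Lemmas~\ref{L3} and \ref{L7} has order exactly $s_1s_2$ and is then forced to be cyclic, producing an element of order $s_1s_2$ in contradiction with Lemma~\ref{L5}; (iii) a minimal normal subgroup is simple and is the unique one, again because $s_1^2$ does not divide $|G_a|$ and no element of order $s_1s_2$ exists. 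None of these steps, nor any substitute for them, appears in your proposal, so the almost-simplicity claim remains unproved. A secondary remark: your faithfulness argument inside Theorem~\ref{T1} is also incomplete, since an element fixing $\{a\}\cup\Gamma_1(a)$ pointwise shares at least $c_2$ common neighbours with its image of any $b\in\Gamma_2(a)$, which only yields $d(b,b^g)\le 2$, not $b=b^g$; the paper's Lemma~\ref{L7} instead shows that such an element must lie in the kernel $K$ of the action on antipodal classes and that $G_a\cap K=1$, using the antipodal structure in an essential way.
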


In \cite{Tsi19,MN2}, it was shown that there are no arc-transitive ${\rm AT4}(p,p+2,r)$-graphs  with $p\in \{3,5\}$. 
Corollary~\ref{C1} allows us to obtain a similar result in case   $p\in \{11,17,27\}$.
By determining the structure of admissible   groups of automorphisms of ${\rm AT4}(p,p+2,r)$-graphs 
with $p$ being a prime power such that numbers $p+2$ and $p^2+4p+2$ are primes not greater than 1000,  
we prove the next theorem.
\begin{theorem}\label{T2}   ${\rm AT4}(p,p+2,r)$-graphs with $p\in \{11,17,27\}$ are not arc-transitive.
\end{theorem}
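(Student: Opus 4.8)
The plan is to prove Theorem~\ref{T2} by applying Corollary~\ref{C1} to each of the three values $p\in\{11,17,27\}$ and deriving a contradiction with the structure of a putative arc-transitive automorphism group $G$. First I would verify that in each case the hypotheses of the corollary are met: for $p=11$ we have $p+2=13$ and $p^2+4p+2=167$, for $p=17$ we have $p+2=19$ and $p^2+4p+2=359$, and for $p=27$ we have $p+2=29$ and $p^2+4p+2=971$, all of which are prime (note $p=27=3^3$ is a prime power, though not prime). Hence in every case $G_a$ is almost simple, and the corollary pins down $\pi(G)$ between $\pi\bigl((p+2)(p^2+4p+2)(p+1)(p+4)\bigr)$ and $\{2,3,\dots,p+2\}\cup\pi\bigl((p^2+4p+2)(p+4)\bigr)$.

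Next I would translate these number-theoretic constraints into a short list of candidate almost simple groups for $G_a$. The point vertex stabilizer $G_a$ acts faithfully on the local strongly regular graph $\Gamma_1(a)$, which has $k=(p+2)(p^2+4p+2)$ vertices; under arc-transitivity $G_a$ acts transitively on these $k$ vertices, so $k$ divides $|G_a|$ and the almost simple socle must admit a transitive permutation representation of this degree with the prescribed prime spectrum. The strategy is then to enumerate the finite simple groups whose order has prime divisors confined to the allowed set and which possess a subgroup of the required index; for the small values of $p$ at hand this should reduce to inspecting the classification of simple groups of small order together with the admissible prime sets, much as in the cited results for $p\in\{3,5\}$.

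For each surviving candidate I would test compatibility with the finer arithmetic of the configuration: the order of the edge stabilizer $G_{a,b}$, whose prime spectrum is forced into $\{2,3,\dots,p\}$ by Theorem~\ref{T1}, together with the valency $p(p+3)$ of $\Gamma_1(a)$ and the subdegree structure of the rank~$3$ action on the local graph. Using $|G_a|=k\cdot|G_{a,b}|/\lambda'$ for the appropriate index, one obtains divisibility and spectral conditions that the candidate must satisfy simultaneously; I expect that in each of the three cases no almost simple group meets all of them at once, yielding the desired nonexistence.

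The main obstacle will be the casework in the second and third steps: ruling out almost simple candidates requires careful bookkeeping of permutation degrees, orders, and prime spectra, and the argument must be airtight for each of $p=11,17,27$ separately since the relevant simple groups differ. In particular, eliminating sporadic or exceptional candidates that happen to have a compatible prime spectrum will likely require invoking the precise subdegrees of a rank~$3$ action on a strongly regular graph with parameters $((p+2)(p^2+4p+2),p(p+3),p-2,p)$, rather than order considerations alone.
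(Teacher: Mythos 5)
Your opening moves coincide with the paper's: check that $p+2$ and $p^2+4p+2$ are prime for each of the three values, invoke Corollary~\ref{C1} (via Proposition~\ref{P2}) to get that $G_a$ is almost simple with $\pi(G_a)$ squeezed into the stated set, and then try to eliminate all almost simple candidates. The genuine gap is that the elimination --- which is the entire substance of the proof --- is never carried out: your second and third steps end with ``I expect that in each of the three cases no almost simple group meets all of them at once.'' As stated, ``enumerate the finite simple groups whose order has prime divisors confined to the allowed set and which possess a subgroup of the required index'' is not a finite, checkable procedure; you need a classification result that makes it one. The paper supplies exactly this: since both $p+2$ and $s=p^2+4p+2$ divide $|G_a|$, it splits into the cases $s\in\pi(soc(G_a))$ and $p+2\in\pi(soc(G_a))$, and in the first case applies Zavarnitsine's classification of finite simple groups with narrow prime spectrum (\cite[Table 2]{Zav}) to conclude that $soc(G_a)$ is one of $A_s,A_{s+1},\dots,A_{s'-1}$ or $L_2(s)$; the alternating possibilities are killed by the fixed-point-count Lemmas~\ref{L3} and \ref{L7}, and $L_2(s)$ is killed because $|L_2(s)|=s(s^2-1)/2$ with $(s^2-1,p+2)$ dividing $3$ would force $p+2\in\pi({\rm Out}(L_2(s)))$, which is absurd. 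In the second case the prime $s$ must divide the order of the outer part, and Lemma~\ref{L8} together with \cite[Table 1]{Zav} and the Atlas gives the contradiction. Nothing in your sketch identifies this dichotomy or the narrow-spectrum tables, and your proposed substitute (rank-$3$ subdegree bookkeeping) is not what makes the cases close; the paper never needs it.

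Two further slips: for $p=27$ one has $p^2+4p+2=729+108+2=839$, not $971$ (both happen to be prime, so the hypothesis check survives, but the number entering all subsequent arithmetic would be wrong); and the orbit--stabilizer relation for the transitive action of $G_a$ on the $k$ vertices of $\Gamma_1(a)$ is simply $|G_a|=k\cdot|G_{a,b}|$ --- the factor $\lambda'$ in your formula $|G_a|=k\cdot|G_{a,b}|/\lambda'$ does not belong there.
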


Next we list  some main terminology and  notation that are used in this paper.  

In the paper we consider undirected graphs without loops or multiple edges.    
For a vertex $a$ of a graph  $\Gamma$ by $\Gamma_i(a)$ we denote  {\it $i$-neighborhood} of $a$, 
that is, the subgraph of $\Gamma$ which is induced by the set of  vertices that are at distance  $i$ from $a$.
By a {\it local graph} of a graph $\Gamma$ we mean a  $1$-neighborhood of some vertex of $\Gamma$. 
A connected graph  $\Gamma$ of diameter $d$ is called  {\it distance-regular}, if there are constants
 $c_i,a_i$ and $b_i$ such that for all $i\in \{0,1,\ldots,d\}$ and each pair  of vertices $u$ and $ w$ which are at distance  $i$ in $\Gamma$, the following equalities hold: $c_i=|\Gamma_{i-1} (u)\cap \Gamma_1(w)|$,  $a_i=|\Gamma_{i} (u)\cap \Gamma_1(w)|$
  and  $b_i=|\Gamma_{i+1} (u)\cap \Gamma_1(w)|$ (it is assumed that $b_d=c_0=0$), and, in particular, $|\Gamma_1(u)|=b_0=c_i+a_i+b_i$.  The  sequence $\{b_0, b_1,\ldots, b_{d-1} ; c_1,\ldots, c_d \}$ is called the {\it 
  intersection array} of a distance-regular graph. Further, if the binary relation ``to be at distance 0 or d'' on the set of vertices of a connected graph $\Gamma$ of diameter $d$  is an equivalence relation, 
  then the graph $\Gamma$ is called {\it antipodal} and  classes of this relation are called {\it antipodal classes} of $\Gamma$. It is well-known that distance-regular graphs split into the families of primitive graphs and imprimitive graphs, and by a theorem of Smith, each imprimitive distance-regular graph of valency at least 3 is bipartite or antipodal.

For a graph  $\Gamma$ and a subset $X\subseteq {\rm Aut}(\Gamma)$ by 
${\rm Fix}(X)$ we denote the set of all vertices of $\Gamma$, that are fixed by every element of $X$, 
which we will also identify with a subgraph of $\Gamma$ induced by ${\rm Fix}(X)$.  
A graph is called {\it arc-transitive}, if its automorphism group acts transitively on the set of its arcs
(ordered pairs of adjacent vertices).
Further, for a natural number $n$ by $\pi(n)$ we denote the set of its prime divisors.
 For a finite group $G$  the set  $\pi(|G|)$ is called  the {\it prime spectrum} of $G$, which is shortly denoted by  $\pi(G)$.

The parameters $b_0, a_1$ and $c_2$ of a distance-regular graph will be also written as   $k,\lambda$ and $\mu$,
respectively.
A distance-regular graph of diameter 2 on  $v$ vertices is called  a {\it strongly regular graph}  with parameters $(v,k,\lambda,\mu)$. 
If one of the parameters $k,\lambda$ and $\mu$ is defined in an arbitrary graph $\Omega$,
then this parameter of $\Omega$ will be also written as  $k_{\Omega}$, $\lambda_{\Omega}$ or $\mu_{\Omega}$, respectively. 
 
Let  $\Gamma$ be an antipodal graph. By  ${\mathcal F}(\Gamma)$ we will denote the set of antipodal classes of $\Gamma$. 
The graph, denoted by $\bar\Gamma$,   with the vertex set ${\mathcal F}(\Gamma)$, 
in which two vertices $F_1$ and $F_2$ are adjacent if and only if $\Gamma_1(a)\cap F_2\neq \varnothing$ for some vertex $a\in F_1$, is called an {\it antipodal quotient} of $\Gamma$. The graph $\Gamma$ is called an
 {\it antipodal  $r$-cover} of $\bar\Gamma$, if, for every two antipodal classes $F_1,F_2\in {\mathcal F}(\Gamma)$, 
 we have $r=|F_1|=|F_2|$ and the subgraph of  $\Gamma$ that is induced by $F_1\cup F_2$ is a
 perfect matching or a coclique.   
 
Let $\Gamma$ be a distance-regular graph of diameter 4 with intersection array $\{b_0,b_1,b_2,b_3;c_1,c_2,\\ 
c_3,c_4\}.$
By~\cite[Proposition 4.2.2]{BCN} the graph $\Gamma$ is antipodal if and only if $b_i=c_{4-i}$
for all $i\in\{0,1,3,4\}$.
In this case,  $\Gamma$ is an antipodal  $r$-cover of  $\bar{\Gamma}$,  $r=1+b_2/c_2$ and the graph  $\bar{\Gamma}$   is strongly regular with parameters $(v/r,b_0,a_1,rc_2)$, where $v=r(b_0+1)+b_0b_1/c_2$ is the number 
of vertices in~$\Gamma$.
\medskip

In order to prove Theorem~\ref{T1}, we will use the following basic properties of ${\rm AT4}(p,p+2,r)$-graphs (see  \cite{Jur}, \cite{JK07}
and  \cite{GMP13}; we also note here a misprint    
in the parameters $b_0$ and  $c_4$ of an ${\rm AT4}(p,p+2,r)$-graph in  \cite[Theorem 4]{GMP13}: there should be written ``$(p+2)(p^2+4p+2)$'' instead of ``$(p+1)(p+2)^2$''). 
 
\begin{proposition}\label{P1}
Let $\Gamma$ be an ${\rm AT4}(p,p+2,r)$-graph and let  $\bar \Gamma$ be its antipodal quotient. 
Then the following statements hold:

  $(1)$ $2p(p+1)(p+2)/r$ is even, $2<r<p+2$, $r$  divides $2(p+1)$ and $\Gamma$ has intersection array  \begin{multline*}
  \{(p+2)(p^2+4p+2),(p+3)(p+1)^2,(r-1)2(p+1)(p+2)/r,1;\\
  1,2(p+1)(p+2)/r,(p+3)(p+1)^2,(p+2)(p^2+4p+2)\};
  \end{multline*}

  $(2)$  $\bar \Gamma$ is a strongly regular graph with parameters  
  $$((p+1)^2(p+4)^2/2,(p+2)(p^2+4p+2),p(p+3),2(p+1)(p+2))$$ and non-principal eigenvalues 
  $p,-(p^2+4p+4)$;

  $(3)$ the second neighborhood of each vertex in  $\bar \Gamma$   is a strongly regular graph with parameters   $$((p+1)(p+3)(p^2+4p+2)/2,p(p+2)^2,p^2+p-2,2p(p+1))$$ and non-principal eigenvalues 
  $p,-(p^2+2p+2)$;
  
  $(4)$ the second neighborhood of each vertex in $\Gamma$ is a distance-regular graph with intersection array  
  $$\{p(p+2)^2,(p+1)^3,2(r-1)p(p+1)/r,1;1,2p(p+1)/r,(p+1)^3,p(p+2)^2\}.$$ 
\end{proposition}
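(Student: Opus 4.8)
The plan is to derive all four statements from the general parametrization of antipodal tight graphs of diameter $4$ together with the two structural inputs available for this class---the strong regularity of the local graphs and Juri\v{s}i\'{c}'s distance-regularity of the second subconstituent---so that, once $q$ is specialized to $p+2$, what remains is algebraic simplification. First I would record the general intersection array of an ${\rm AT4}(p,q,r)$-graph produced by the tightness equality in \cite{JKT} and \cite{Jur}, in which every intersection number is a rational function of $p$, $q$ and $r$. Setting $q=p+2$ and simplifying yields the displayed array of part $(1)$; the key reductions are that the local valency becomes $a_1=p(p+3)$, that $c_2=2(p+1)(p+2)/r$, and hence $b_2=(r-1)c_2$ and $b_1=(p+3)(p+1)^2$. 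Positivity and integrality of $c_2$ and $b_2$ already force $r\mid 2(p+1)(p+2)$; the sharper divisibility $r\mid 2(p+1)$, the bounds $2<r<p+2$, and the parity of $2p(p+1)(p+2)/r=p\,c_2$ then follow from the feasibility conditions for ${\rm AT4}(p,q,r)$-graphs recorded in \cite{JKT}, \cite{Jur} and \cite{JK07} (integrality of the eigenvalue multiplicities of $\Gamma$, non-bipartiteness, and an elementary edge-counting constraint), specialized to $q=p+2$.

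For part $(2)$ I would apply the antipodal-quotient formula recorded in the Introduction: $\bar\Gamma$ is strongly regular with parameters $(v/r,b_0,a_1,rc_2)$. Substituting $b_0=(p+2)(p^2+4p+2)$, $a_1=p(p+3)$ and $rc_2=2(p+1)(p+2)$ gives three of the four parameters at once, while $v=r(b_0+1)+b_0b_1/c_2$ simplifies to $v/r=(p+1)^2(p+4)^2/2$. The non-principal eigenvalues are then read off from the standard formula $\tfrac12\bigl((\lambda-\mu)\pm\sqrt{(\lambda-\mu)^2+4(k-\mu)}\bigr)$; here the discriminant collapses to the perfect square $(p+1)^2(p+4)^2$, producing the values $p$ and $-(p+2)^2=-(p^2+4p+4)$.

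Part $(4)$ is the only genuinely structural step, and I expect it to be the main obstacle. I would invoke Juri\v{s}i\'{c}'s theorem \cite[Theorem 5.5]{Jur}, that the second subconstituent $\Gamma_2(a)$ of any ${\rm AT4}(p,p+2,r)$-graph is again a non-bipartite antipodal distance-regular graph of diameter $4$, so that it suffices to identify its intersection array. Its valency equals $a_2=k-b_2-c_2=p(p+2)^2$, and the remaining intersection numbers are computed from the local parameters of $\Gamma$, with the antipodal relation $b_i=c_{4-i}$ and the covering relation $b_2=(r-1)c_2$ persisting, which yields the displayed array. Establishing that $\Gamma_2(a)$ is distance-regular in the first place---rather than merely computing the numbers---is the hard part, and is exactly where the tightness hypothesis and the strong regularity of the local graphs are essential.

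Finally, part $(3)$ I would deduce from part $(4)$ rather than prove independently. Since the diameter of $\Gamma$ is $4$ and $4-2=2$, the standard correspondence of distances under an antipodal covering shows that $\Gamma_2(a)$ is a union of full antipodal classes of $\Gamma$; thus $\Gamma_2(a)$ is itself an antipodal $r$-cover of diameter $4$ whose covering classes are the restrictions of the antipodal classes of $\Gamma$, and its antipodal quotient coincides with the second neighborhood $\bar\Gamma_2(\bar a)$ of $\bar\Gamma$. Applying the quotient formula once more to the array of part $(4)$ then gives a strongly regular graph with parameters $((p+1)(p+3)(p^2+4p+2)/2,\,p(p+2)^2,\,p^2+p-2,\,2p(p+1))$, whose discriminant again collapses to the perfect square $(p+1)^2(p+2)^2$ and delivers the eigenvalues $p$ and $-(p^2+2p+2)$. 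The one point requiring care is the geometric claim that distance $2$ in $\bar\Gamma$ pulls back exactly to distance $2$ in $\Gamma$, so that $\bar\Gamma_2(\bar a)$ really is the quotient of $\Gamma_2(a)$; this is precisely the distance correspondence noted above.
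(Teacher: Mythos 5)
Your proposal is correct and takes essentially the same route as the paper: the paper states Proposition~\ref{P1} without proof, deferring entirely to the known ${\rm AT4}(p,q,r)$ theory in \cite{Jur}, \cite{JK07} and \cite{GMP13} (modulo the noted misprint), and your argument simply specializes those same inputs --- the general tight array at $q=p+2$, the antipodal-quotient formula $(v/r,b_0,a_1,rc_2)$, and Juri\v{s}i\'{c}'s Theorem 5.5 for the second subconstituent. Your algebra checks out (e.g.\ $v/r=(p+1)^2(p+4)^2/2$, and the eigenvalue discriminants collapse to $(p+1)^2(p+4)^2$ and $(p+1)^2(p+2)^2$, giving $p,-(p^2+4p+4)$ and $p,-(p^2+2p+2)$ respectively), so the only difference is that you make explicit the routine verification that the paper leaves to its citations.
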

\medskip 
 
 This paper is arranged as follows. 
In Section~\ref{Sec1} we investigate   automorphism groups of strongly regular graphs with parameters $((p+2)(p^2+4p+2),p(p+3),p-2,p)$.
In Section~\ref{Sec2} we obtain several restrictions on the prime spectrum of an automorphism group of an ${\rm AT}4(p,p+2,r)$-graph, and prove Theorems~\ref{T1}, \ref{T2} and Corollary~\ref{C1}.

 \section{Automorphisms of strongly regular graphs with parameters  $((p+2)(p^2+4p+2),p(p+3),p-2,p)$}\label{Sec1}

In this section    $\Theta$  is a strongly regular graph with parameters $$((p+2)(p^2+4p+2),p(p+3),p-2,p),$$ $\widetilde{G}={\rm Aut}(\Theta)$ and $v=(p+2)(p^2+4p+2)=(p+2)((p+2)^2-2)$. 
Note that $\Theta$ has spectrum $$p(p+3)^1,  p^{(p+3)((p+2)^2-2)/2},  (-p-2)^{(p+1)((p+2)^2-2)/2-1}$$ and by~\cite[Proposition 1.3.2]{BCN}
the size of a clique in $\Theta$ does not exceed $(p+2)^2$.

Denote by $\psi$ the matrix representation of $\widetilde{G}$ in
$GL_{v}(\mathbb{C})$, induced by permutation representation of $\widetilde{G}$ on the vertices of $\Theta$, 
and let $\alpha_j(g)$ be the number of vertices $x$ of $\Theta$ such that $d(x,x^g)=j$. 
The linear  space  ${\mathbb{C}}^{v}$ is an orthogonal direct sum of  $\psi(\widetilde{G})$-invariant
eigenspaces of the adjacency matrix of $\Theta$  (see~\cite[\S\ 3]{Cam99}). 
Let us calculate the formulas for the characters of projections of $\psi$ on subspaces with dimensions $n_1={(p+3)((p+2)^2-2)/2}$ and $n_2=(p+1)((p+2)^2-2)/2-1$.

\begin{lemma}\label{L1}  
If $g\in \widetilde{G}$ and $\chi_i$ is the character of the projection of $\psi$ 
on the subspace with dimension $n_i$, where $i\in \{1,2\}$, 
 then  
\begin{multline*}\chi_1(g)=\frac{(p+3)\alpha_0(g)/2+\alpha_1(g)/2-\alpha_2(g)/(2(p+1))}{p+2}  \textit{ and }\\
\chi_2(g)=\frac{p(p+3)\alpha_0(g)/2-(p+2)\alpha_1(g)/2+p\alpha_2(g)/(2(p+1))}{(p+2)^2-2}.
\end{multline*}
If $|g|$ is a prime, then $\chi_1(g)-n_1$ and  $\chi_2(g)-n_2$ are divisible by $|g|$. 
\end{lemma}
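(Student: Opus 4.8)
The plan is to realise each $\chi_i$ as the trace of $\psi(g)$ followed by the orthogonal projection onto the relevant eigenspace, and then to rewrite that projection in terms of the three associate matrices of the association scheme of $\Theta$. Write $A$ for the adjacency matrix of $\Theta$, $I$ for the identity, $J$ for the all-ones matrix, and set $A_0=I$, $A_1=A$, $A_2=J-I-A$. Since $\widetilde{G}$ preserves $A$, the permutation matrix $\psi(g)$ commutes with $A$, hence with the orthogonal projection $E_i$ onto the eigenspace of dimension $n_i$; consequently $\chi_i(g)=\operatorname{tr}(\psi(g)E_i)$. Moreover, because $d(x,x^g)$ takes only the values $0,1,2$, one has $\operatorname{tr}(\psi(g)A_j)=\alpha_j(g)$ for $j\in\{0,1,2\}$. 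Thus, once $E_i$ is written as a linear combination $\sum_j c^{(i)}_j A_j$, we immediately obtain $\chi_i(g)=\sum_j c^{(i)}_j\alpha_j(g)$.

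First I would compute the idempotents. The eigenvalues of $A$ are $p(p+3)$ (on the constant vectors), $p$ (with multiplicity $n_1$), and $-(p+2)$ (with multiplicity $n_2$), so by Lagrange interpolation
\[
E_1=\frac{(A-p(p+3)I)(A+(p+2)I)}{(p-p(p+3))(p+(p+2))},\qquad
E_2=\frac{(A-p(p+3)I)(A-pI)}{(-(p+2)-p(p+3))(-(p+2)-p)}.
\]
Expanding each numerator and substituting the strongly regular relation $A^2=p(p+2)I-2A+pJ$ (which uses $\lambda=p-2$, $\mu=p$, $k=p(p+3)$) turns each numerator into an explicit combination of $I$, $A$, $J$; replacing $J=A_2+I+A$ then gives $E_i=\sum_j c^{(i)}_j A_j$. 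Substituting the coefficients into $\chi_i(g)=\sum_j c^{(i)}_j\alpha_j(g)$ and cancelling the common factor $p+1$ (which survives only in the $\alpha_2$-term) yields exactly the two displayed formulas. As a sanity check, setting $g=1$, where $\alpha_0=v$ and $\alpha_1=\alpha_2=0$, returns $\chi_1(1)=n_1$ and $\chi_2(1)=n_2$.

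For the divisibility assertion, observe that $\psi(g)$ is a permutation matrix, so its restriction to the image of $E_i$ has eigenvalues that are $|g|$-th roots of unity; hence $\chi_i(g)$ is a sum of roots of unity, an algebraic integer, and by the formulas just derived it is rational, so $\chi_i(g)\in\mathbb{Z}$, and likewise $n_i=\chi_i(1)\in\mathbb{Z}$. Assume now $|g|=q$ is prime and let $\zeta$ be a primitive $q$-th root of unity. Writing $a_m$ for the multiplicity of the eigenvalue $\zeta^m$ of $\psi(g)$ on the image of $E_i$, we get $\chi_i(g)-n_i=\sum_{m=1}^{q-1}a_m(\zeta^m-1)$, and each factor $\zeta^m-1=(\zeta-1)(\zeta^{m-1}+\cdots+1)$ lies in the ideal $(\zeta-1)\mathbb{Z}[\zeta]$. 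Since $(\zeta-1)$ is the unique prime of $\mathbb{Z}[\zeta]$ above $q$, we have $(\zeta-1)\mathbb{Z}[\zeta]\cap\mathbb{Z}=q\mathbb{Z}$, so the rational integer $\chi_i(g)-n_i$ is divisible by $q=|g|$.

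The conceptual core is the identity $\chi_i(g)=\operatorname{tr}(\psi(g)E_i)$ together with its translation into the distance distribution $\alpha_j(g)$; the divisibility step is a standard cyclotomic congruence and poses no real difficulty once integrality is in hand. I expect the only delicate point to be the algebraic simplification of the idempotents, where the main pitfall is tracking signs and the lone factor $p+1$ that remains in the denominator of the $\alpha_2$-coefficient.
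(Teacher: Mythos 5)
Your proposal is correct and follows essentially the same route as the paper: the coefficients $c^{(i)}_j$ you extract from the idempotents are exactly the entries (divided by $v$) of the second eigenmatrix $Q$ that the paper writes down, and the divisibility step is the same standard congruence for characters at prime-order elements. The only difference is that the paper delegates the general machinery (the trace formula $\chi_i(g)=\operatorname{tr}(\psi(g)E_i)$, the expansion of $E_i$ in the Bose--Mesner basis, and the cyclotomic argument) to Cameron, \S 3.7, whereas you re-derive it explicitly; your computation of $E_1,E_2$ via Lagrange interpolation and the relation $A^2=p(p+2)I-2A+pJ$ indeed reproduces the stated formulas.
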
\begin{proof}
Due to results in~\cite[\S\ 3.7]{Cam99} it suffices to note that the second eigenmatrix 
$Q$ of $\Theta$ equals 
$$\left (
 \begin{array}{ccc}
         1 & 1 & 1 \\
         (p+3)((p+2)^2-2)/2 &  (p+2)^2/2-1 & -((p+2)^2-2)/(2(p+1))  \\
         (p+1)((p+2)^2-2)/2-1 & -(p+2)^2/2 & p(p+2)/(2(p+1))
 \end{array}
 \right ).$$
 
The lemma is proved.
\end{proof}
Note that the values of $\chi_i$, which are calculated by the formulas from Lemma~\ref{L1}, are integer.

\begin{lemma}\label{L2} If   $\Theta$ contains a proper strongly regular subgraph $\Delta$ with parameters  $(v',k',p-2,p)$, then $k'-p+1$ is a square, $p$ divides $(k'-t')(k'-s')$, and  $2p$  divides 
$k'(k'-s')$, where $t'$ and $s'$ are non-principal eigenvalues of $\Delta$, $s'<0$, and if moreover
$p$ is a prime power, the one of the following statements holds:
\begin{itemize}
\item[$(1)$]  $p=t'+1=-(s'+1), k'=p^2+p-1, v'=p^2(p+2)$;
\item[$(2)$]  $s'=-p$, $k'=p(p-1), v'=p((p-1)^2+1)$;
\item[$(3)$]  $s'=-p/2$, $k'=p^2/4,  v'=(p^2/8-p/4+1)(p/2+1)$ or $s'=-3, p=2$;
\item[$(4)$]  $t'=p/2$, $k'=p(p/2+4)/2, v'=(3+p/2)(p^2/8+5p/4+1)$.
 
\end{itemize}
\end{lemma}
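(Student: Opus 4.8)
The plan is to read off the eigenvalues of $\Delta$, apply the standard feasibility conditions for a strongly regular graph, and reserve the prime-power hypothesis for the final combinatorial step. First I note that $\lambda_\Delta-\mu_\Delta=(p-2)-p=-2\ne -1$, so $\Delta$ is not a conference graph and its non-principal eigenvalues are integers. Solving the defining quadratic gives $t'=-1+\sqrt{k'-p+1}$ and $s'=-1-\sqrt{k'-p+1}$, so $k'-p+1$ must be a perfect square; I set $m:=\sqrt{k'-p+1}\ge 1$, so that $t'=m-1$, $s'=-m-1$ and $k'=m^2+p-1$. This is the first assertion.

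Next I extract the two divisibility conditions from integrality. From $k'(k'-\lambda_\Delta-1)=(v'-k'-1)\mu_\Delta$ I get $v'-k'-1=k'm^2/p$, hence $p\mid k'm^2$; since $(k'-t')(k'-s')=(k')^2+k'+p\equiv (k')^2+k'\equiv k'm^2\pmod p$, this says precisely $p\mid (k'-t')(k'-s')$. Computing the multiplicity $f$ of $t'$ from $f+g=v'-1$ and $k'+ft'+gs'=0$ gives $f=k'(k'-s')/(2p)$, and integrality of $f$ is exactly $2p\mid k'(k'-s')$. This yields the three stated conditions without using that $p$ is a prime power.

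To bound $m$ I would use that the eigenvalues of the induced subgraph $\Delta$ interlace those of $\Theta$, whose spectrum is $\{p(p+3),p,-(p+2)\}$. Interlacing gives $s'\ge -(p+2)$ and $t'\le p$, each equivalent to $m\le p+1$. The value $m=p+1$ forces $k'=(p+1)^2+p-1=p(p+3)=k_\Theta$, so the $k_\Theta$-regular induced subgraph $\Delta$ would be a union of connected components of the connected graph $\Theta$, i.e.\ $\Delta=\Theta$, contradicting properness. The boundary value $m=1$ gives $\mu_\Delta=k'$, a complete multipartite graph, which I treat directly.

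Finally, assuming $p=\ell^a$, I rewrite the condition $p\mid (k'-t')(k'-s')$ as $p\mid m^2(m-1)(m+1)$. For odd $\ell$, at most one of the consecutive integers $m-1,m,m+1$ is divisible by $\ell$, so $\ell^a$ divides one of $m^2$, $m-1$, $m+1$: the case $\ell^a\mid(m+1)$ forces $m=p-1$ (giving case (2)); the case $\ell^a\mid(m-1)$ forces $m\in\{1,p+1\}$, both already excluded; and the case $\ell^a\mid m^2$ makes $k'\equiv-1$ and $k'-s'\equiv m\pmod p$, so the proved condition $p\mid k'(k'-s')$ forces $p\mid m$ and hence $m=p$ (giving case (1)). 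For $\ell=2$ the same scheme applies: an even $m$ forces $2^a\mid m^2$ and then $m=p$, while an odd $m$ forces $2^a\mid(m-1)(m+1)$ with one factor of $2$-adic valuation exactly $1$, so $2^{a-1}$ divides the other factor; together with $m\le p+1$ this pins $m$ to $\{p/2-1,\,p/2+1,\,p-1\}$, i.e.\ cases (3), (4), (2). I expect the $\ell=2$ analysis to be the main obstacle, since distributing the full power $2^{a}$ across the two even numbers $m-1,m+1$ requires careful bookkeeping of $2$-adic valuations, and the small exceptional configurations (in particular $m=1$ and $p=2$, which produce the alternative $s'=-3,\ p=2$) must be eliminated or recorded by hand rather than emerging from the generic argument.
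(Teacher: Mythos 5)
Your proposal is correct and takes essentially the same route as the paper's proof: the three divisibility conditions come from integrality of the eigenvalues, of the vertex count $v'=(k'-t')(k'-s')/p$ and of the multiplicity $k'(k'-s')/(2p)$; the bound on $m=\sqrt{k'-p+1}$ comes from properness; and your prime-power split on $p\mid m^2$ versus $p\mid(m-1)(m+1)$ is exactly the paper's dichotomy ``$p\mid k'+1$ or $p\mid k'$'', obtained there from $\gcd(k',k'-p+1)\mid p-1$. The only divergences are cosmetic (you bound $m$ via interlacing where the paper uses $k'<k_\Theta$ directly), and the loose ends you flag --- the complete multipartite configuration $m=1$ (which for odd $p$ is killed by the parity of $k'(k'-s')/(2p)$, but for $p=2^a\ge 8$ survives all the arithmetic conditions) and the $2$-adic bookkeeping --- are treated no more fully in the paper's own proof, which simply lists the surviving cases after asserting $t'<p$.
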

\begin{proof} Suppose that $\Delta$ is a strongly regular subgraph of $\Theta$
with parameters $(v',k',p-2,p)$ and  $a\in \Delta$. Then by \cite[Theorem 1.3.1]{BCN} $D^2=k'-p+1<(p+1)^2, t'=-1+\sqrt{1-p+k'}$ has multiplicity $k'(k'-s')/2p$, $ s'=-(t'+2)$ and $v'=(k'-t')(k'-s')/p$.

Let further $p$ be a prime power.  Since $|\Delta_2(a)|=k'(k'-p+1)/p$ and the number $(k',k'-p+1)$ divides $p-1$, 
then either $p$ divides $k'$, or $p$ divides $k'+1$.

If $p$ divides $k'+1$, then $p$ does not divide $s'(s'+2)$, hence 
 $p=t'+1=-(s'+1), k'=p^2+p-1, v'=p^2(p+2)$.
 
Let  $p$ divide $k'$. Then $p$ divide $s'(s'+2)$. 
Since $t'<p,$ then either $s'=-p$, $k'=p(p-1), v'=p((p-1)^2+1)$, or $s'=-p/2$, $k'=p^2/4,
  v'=(p^2/8-p/4+1)(p/2+1)$, or $s'=-3p/2, p=2$,  or $t'=p/2$, $k'=p(p/2+4)/2, v'=(3+p/2)(p^2/8+5p/4+1)$. 
The lemma is proved.  \end{proof}

\begin{lemma}\label{L3} Let  $g\in \widetilde{G}$ be an element of prime order and  $\Omega={\rm Fix}(g)$. 
Then \begin{multline*}
\alpha_1(g)=2(p+1)(|g|z_1+{(p+3)((p+2)^2-2)/2})-(p+2)|\Omega|+((p+2)^2-2)=\\
p|\Omega|-2(p+1)({(p+1)((p+2)^2-2)/2}-1+|g|z_2)+p(p+2)
\end{multline*}
 for some $z_1, z_2\in \mathbb{Z}$ and the following statements hold.
 
\begin{itemize}
\item[$(1)$] If $\Omega\neq \varnothing$, $p$ is a prime power and $p>2$,  then $|\Omega|\le (p+2)^2-2$ and either 
\begin{itemize}
\item[$(i)$] $|g|<p$, or
\item[$(ii)$]  $|g| = p$, $ |\Omega|\equiv 4 \pmod p$,  $\alpha_1(g)=p(|\Omega|-2(p+1)z+p+2)$
for some $z\in \mathbb{Z}$, and each connected component $\Delta$ of $\Omega$ is a single vertex or an amply regular graph of diameter at least  $3$ with parameters 
$(|\Delta|, pl,p-2,p)$, where $\Delta\ge 4(p+1)$ and $l\in \{2,4,...,p+2\}$.

\end{itemize}
\item[$(2)$] If $\Omega= \varnothing$, then either
\begin{itemize}
\item[$(i)$] $|g|$ is odd and divides $(p+2)^2-2$,
$\alpha_1(g)=2(p+1)|g|z_1+(p+2)^2-2$
 for some $z_1\in \mathbb{Z}$, in particular,   $\alpha_1(g)=|g|$ if $|g|=(p+2)^2-2$, or
\item[$(ii)$] $|g|$ is odd and divides $(p+2)$, $\alpha_1(g)=-2(p+1)|g|z_2+p(p+2)$
for some $z_2\in \mathbb{Z}$, or
\item[$(iii)$] $|g|=2$  and  $p$ is even, $\alpha_1(g)=-4(p+1)z_3+p(p+2)$
for some $z_3\in \mathbb{Z}$.
 \end{itemize}
  \end{itemize}
\end{lemma}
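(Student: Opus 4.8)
The plan is to extract both displayed formulas for $\alpha_1(g)$ directly from Lemma~\ref{L1}. Since $\Theta$ has diameter $2$, every vertex $x$ satisfies $d(x,x^g)\in\{0,1,2\}$, so $\alpha_0(g)+\alpha_1(g)+\alpha_2(g)=v$ with $\alpha_0(g)=|\Omega|$; I would substitute $\alpha_2(g)=v-|\Omega|-\alpha_1(g)$ into the expressions of Lemma~\ref{L1} and clear denominators. Using $v=(p+2)((p+2)^2-2)$, the formula for $\chi_1(g)$ collapses to $2(p+1)\chi_1(g)=(p+2)|\Omega|+\alpha_1(g)-((p+2)^2-2)$ and that for $\chi_2(g)$ to $2(p+1)\chi_2(g)=p|\Omega|-\alpha_1(g)+p(p+2)$. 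Solving each for $\alpha_1(g)$ and inserting $\chi_i(g)=n_i+|g|z_i$ (legitimate since $|g|$ is prime, by the divisibility clause of Lemma~\ref{L1}) yields precisely the two stated forms. I would note that equating them returns only the orbit-counting congruence $|\Omega|\equiv v\pmod{|g|}$, so the two forms are equivalent; the real content of the lemma comes from combining them with the nonnegativity of the $\alpha_i(g)$ and with the local geometry of $\Omega$.

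For part~(1) the engine is orbit counting on common neighbourhoods together with $\lambda=p-2<p=\mu$. If $x,y\in\Omega$ are adjacent, $g$ permutes their $\lambda=p-2$ common neighbours in orbits of length $1$ or $|g|$; once $|g|=p$ these $p-2<p$ vertices cannot form a full orbit, so all are fixed and $\lambda_\Omega=p-2$. For $x,y\in\Omega$ at distance $2$ the $\mu=p$ common neighbours split into orbits of length $1$ or $p$, so the number fixed lies in $\{0,p\}$, and it equals $p$ whenever the pair lies in a common component (such a pair has a fixed common neighbour there, forcing all $p$ to be fixed). Since $|\Theta_1(x)|=p(p+3)\equiv0\pmod p$, every fixed vertex has $\Omega$-degree a multiple of $p$, say $pl$, so each nontrivial component $\Delta$ is edge-regular with $\lambda_\Delta=p-2$, co-edge-regular with $\mu_\Delta=p$, and amply regular with parameters $(|\Delta|,pl,p-2,p)$. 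Here $p=|g|$ is prime and $p>2$, hence odd, and the neighbourhood $\Omega_1(a)$ inside $\Delta$ induces a $(p-2)$-regular graph on $pl$ vertices, so the handshake lemma forces $pl$ even and therefore $l$ even; properness of $\Omega$ rules out $l=p+3$ (which would give $\Omega_1(a)=\Theta_1(a)$ and propagate to $\Omega=\Theta$), leaving $l\in\{2,4,\dots,p+2\}$. The factor $p\mid\alpha_1(g)$ comes for free because $\{x:x\sim x^g\}$ is $g$-invariant and fixed-point-free, hence a union of $p$-orbits, giving the displayed factorisation; reducing the equality of the two forms modulo $p$ gives $|\Omega|\equiv4\pmod p$. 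Excluding diameter $2$ via Lemma~\ref{L2} (I would check its admissible parameter sets $(v',pl,p-2,p)$ one by one) leaves diameter $\ge3$, after which the count $1+pl+l(pl-p+1)$ of the first two neighbourhoods, minimised at $l=2$, together with a nonempty $\Delta_3$ forces $|\Delta|\ge4(p+1)$.

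For part~(2), $\Omega=\varnothing$ means $g$ is fixed-point-free, so all vertex orbits have length $|g|$ and $|g|\mid v=(p+2)((p+2)^2-2)$; being prime, $|g|$ divides $p+2$ or $(p+2)^2-2$. A parity check shows $v$ is odd when $p$ is odd, so a fixed-point-free involution forces $p$ even, which I isolate as case~(iii); otherwise $|g|$ is odd. Setting $|\Omega|=0$ in the $\chi_1$-form when $|g|\mid(p+2)^2-2$, and in the $\chi_2$-form when $|g|\mid p+2$, and then absorbing the residual constants into the $z_i$ gives the stated congruences $\alpha_1(g)\equiv(p+2)^2-2$ and $\alpha_1(g)\equiv p(p+2)$; the absorption is legitimate because the residual terms are divisible by $2(p+1)|g|$, using $\gcd(|g|,2(p+1))=1$ for odd $|g|$ (note $(p+2)^2-2\equiv-1$ and $p+2\equiv1\pmod{p+1}$). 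For $|g|=2$ one has $\gcd(2,2(p+1))=2$, which is exactly why the modulus drops to $4(p+1)$ in case~(iii). Finally $\alpha_1(g)=|g|$ when $|g|=(p+2)^2-2$ follows by pinning $z_1=0$ through $0\le\alpha_1(g)\le v$.

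The step I expect to be the main obstacle is the uniform ceiling $|\Omega|\le(p+2)^2-2$ (equivalently $|\Omega|\le k-s$, with $s=-(p+2)$ the least eigenvalue) together with the dichotomy $|g|\le p$. The ceiling should follow from Cauchy interlacing of the induced subgraph $\Omega$ against the spectrum $\{p(p+3),p,-(p+2)\}$ of $\Theta$, which caps at one the number of eigenvalues of $\Omega$ exceeding $p$ and hence, in the regular situation, the number of nontrivial components; combining this with the Hoffman coclique bound on the isolated fixed vertices should yield the bound. The genuinely delicate point is excluding intermediate prime orders $p<|g|$: when $|g|>p$ every common neighbour of any two fixed vertices is fixed, so $\Omega$ becomes strongly closed, and one must force a contradiction from Lemma~\ref{L2} together with the arithmetic of the permutation character. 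I expect the hypothesis that $p$ is a prime power to do the decisive work here, by restricting which primes can divide the relevant neighbourhood sizes.
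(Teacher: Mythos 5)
Your derivation of the two formulas is exactly the paper's: substitute $\alpha_2(g)=v-\alpha_0(g)-\alpha_1(g)$ into Lemma~\ref{L1}, use $\chi_i(g)\equiv n_i\pmod{|g|}$, and indeed your intermediate identities $2(p+1)\chi_1(g)=(p+2)|\Omega|+\alpha_1(g)-((p+2)^2-2)$ and $2(p+1)\chi_2(g)=p|\Omega|-\alpha_1(g)+p(p+2)$ are correct. Your part~(2) (parity of $v$, the divisor split of $v$, absorption of the constants using $\gcd(|g|,2(p+1))=1$, pinning $z_1=0$ when $|g|=(p+2)^2-2$) and your treatment of $|g|=p$ (orbits of length $1$ or $p$ inside $\lambda$- and $\mu$-sets, degrees divisible by $p$, amply regular components, $|\Omega|\equiv 4\pmod p$, exclusion of diameter $2$ via Lemma~\ref{L2}, and $1+2p+2(p+1)+1=4(p+1)$) coincide with the paper's argument; your handshake-lemma justification that $l$ is even is even slightly more explicit than what the paper writes down. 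The problem is that the two steps you yourself flag as obstacles are genuine gaps, and they are precisely where the content of statement~(1) lies.

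First, the ceiling $|\Omega|\le(p+2)^2-2$ is not an interlacing exercise in the paper: it is quoted from \cite[Theorem 3.2]{BehLam}, the fixed-point bound $|{\rm Fix}(g)|\le \mu v/(k-\theta_1)$ for a nontrivial automorphism of a strongly regular graph, which with $\mu=p$, $k=p(p+3)$, $\theta_1=p$ evaluates to exactly $(p+2)^2-2$. Your proposed substitute cannot work as sketched: interlacing caps at one the number of components of $\Omega$ having an eigenvalue above $p$, and the ratio bound caps the coclique of isolated fixed vertices by roughly $(p+2)^2$, but nothing in that argument bounds the size of the one permitted large component --- a $pl$-regular graph with spectrum in $[-(p+2),p]$ apart from its valency can be arbitrarily large (complete multipartite graphs $K_{m\times(p+2)}$, say) --- so no bound on $|\Omega|$ follows. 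Second, the exclusion of prime orders $|g|>p$ is announced but never executed, and here the prime-power hypothesis acts through a finite case analysis, not a general principle. The paper's argument is: for $|g|>p=\mu>\lambda$, all common neighbours of fixed vertices are fixed, so $\Omega$ is strongly regular with parameters $(|\Omega|,x_1,p-2,p)$; Lemma~\ref{L2} then leaves four parameter families, of which $(1)$, $(2)$ and $(4)$ violate $v'\le(p+2)^2-2$, while $(3)$ (with $k'=p^2/4$) forces $p\in\{4,8,16\}$ and dies by the congruence $x_1\equiv p(p+3)\pmod{|g|}$, since then $|g|$ must divide $3p(p+4)/4$, all of whose prime divisors are smaller than $p$, contradicting $|g|>p$. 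Until this analysis (or an equivalent) is carried out, your proposal establishes the displayed formulas, part~(2), and the $|g|=p$ alternative, but not the dichotomy ``$|g|<p$ or $|g|=p$'' that part~(1) asserts.
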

\begin{proof} 
Let $a\in \Omega\neq \varnothing$. Then by \cite[Theorem 3.2]{BehLam} $|\Omega|\le p\cdot v/(k-p)=(p+2)^2-2$. We have  $$x_1=|\Omega\cap \Theta_1(a)|\equiv p(p+3)\pmod {|g|},$$  $$x_2=|\Omega\cap \Theta_2(a)|\equiv (p+3)(p+1)^2\pmod {|g|},$$  $$|\Theta-\Omega|=v-1-x_1-x_2\equiv 0\pmod {|g|},$$
in particular, if $|g|$ does not divide $p(p+3)(p+1)$, then $x_1x_2>0$.

Let $|g|>p$.  Then for each vertex $b\in \Omega-\{a\}$ we have 
$\Theta_1(a)\cap \Theta_1(b) \subset \Omega$ and by  \cite[Proposition 1.1.2]{BCN}, $\Omega$ 
is a strongly regular graph with parameters 
$(|\Omega|, x_1, p-2,p)$ and $|\Omega|=1+x_1+x_1(x_1-p+1)/p$.
Let $k'=x_1$ and $v'=|\Omega|$.
By Lemma~\ref{L2} it suffices to consider the following four cases.

\quad{$(1)$} If $k'=p^2+p-1$ and $v'=p^2(p+2)$, then $v'> (p+2)^2-2,$ a contradiction.

\quad{$(2)$} If $k'=p(p-1)$ and $v'=p((p-1)^2+1)$, then $v'> (p+2)^2-2,$ a contradiction.

\quad{$(3)$} Let $k'=p^2/4$ and $v'=(p^2/8-p/4+1)(p/2+1)$. Since $v'\le (p+2)^2-2,$
 we get $p\in \{4,8,16\}$.
But $p^2/4\equiv p(p+3)\pmod {|g|}$ and $|g|\le p(p+3)$, a contradiction for all admissible pairs $(p,|g|)$.

\quad{$(4)$} If $k'=p(p/2+4)/2$ and $v'=(3+p/2)(p^2/8+5p/4+1)$, then $v'>(p+2)^2-2$, again a contradiction.

Let  $p$ be a prime and $|g|=p$. Then $\lambda_{\Omega}=p-2$ and for each vertex  $b\in \Omega-\{a\}$ 
we have
$|\Theta_1(a)\cap \Theta_1(b) \cap \Omega|\in \{0,p-2,p\}$.
Let $\Delta$ be a connected component of $\Omega$ and $|\Delta|>1$. 

If $d(\Delta)=1$, then $\Delta\simeq K_{p}$, but $|\Omega(a)|\equiv 0\pmod p$ for $a\in\Delta$, a contradiction.

Suppose that $d(\Delta)>1$. Then  $\lambda_{\Delta}=p-2$ and  $\mu_{\Delta}=p$
  and by  \cite[Proposition 1.1.2]{BCN}, $\Delta$ is an amply regular graph   with parameters 
$(|\Delta|, k_{\Delta},p-2,p)$ and $|\Delta|\ge 1+ k_{\Delta}+k_{\Delta}(k_{\Delta}-p+1)/p$.
If  $d(\Delta)=2$, then $|\Delta|= 1+ k_{\Delta}+k_{\Delta}(k_{\Delta}-p+1)/p$ and $k_{\Delta}=pl$, where $l\ge 1$.
By Lemma~\ref{L2}    $k_{\Delta}=p(p-1)$ and $|\Delta|=p((p-1)^2+1)$, which is impossible as $|\Delta|\le (p+2)^2-2$.

Therefore, $d(\Delta)\ge 3$,  and by  \cite[Theorem 1.5.5]{BCN}, $k_{\Delta}(k_{\Delta}-p+1)/p\ge k_{\Delta}\ge 2p-1$.
As above, we get   $k_{\Delta}=pl,$ where $l\ge 2$, $|\Omega|\equiv 4\pmod p$, hence $|\Delta|\ge 4(p+1)$ and $|\Omega-\Delta|\le p^2-2$.

Now let $|g|$ and $\Omega$ be arbitrary. By Lemma~\ref{L1} 
$$
|g|z=\frac{(p+3)\alpha_0(g)/2+\alpha_1(g)/2-\alpha_2(g)/(2(p+1))}{p+2}-{(p+3)((p+2)^2-2)/2}$$
 for some $z\in \mathbb{Z}$. 
Hence \begin{multline*}(p+1)(p+3)|\Omega|+(p+1)\alpha_1(g)-\alpha_2(g)= 
2(p+1)(p+2)(|g|z+{(p+3)((p+2)^2-2)/2}).
\end{multline*} 
Taking into account that $(p+2)((p+2)^2-2)-|\Omega|-\alpha_1(g)=\alpha_2(g)$, we obtain
\begin{multline*}
2(p+1)(p+2)(|g|z+{(p+3)((p+2)^2-2)/2})=\\(p+1)(p+3)|\Omega|+(p+1)\alpha_1(g)-
((p+2)((p+2)^2-2)-|\Omega|-\alpha_1(g))= \\
(p^2+4p+4)|\Omega|+(p+2)\alpha_1(g)-(p+2)((p+2)^2-2)
\end{multline*} and  $\alpha_1(g)=2(p+1)(|g|z+{(p+3)((p+2)^2-2)/2})-(p+2)|\Omega|+((p+2)^2-2)$.

Again by Lemma~\ref{L1}
\begin{multline*}
|g|z=\frac{p(p+3)\alpha_0(g)/2-(p+2)\alpha_1(g)/2+p\alpha_2(g)/(2(p+1))}{(p+2)^2-2}-{(p+1)((p+2)^2-2)/2}+1
\end{multline*} for some $z\in \mathbb{Z}$. 
Hence \begin{multline*}(p+1)p(p+3)|\Omega|-(p+1)(p+2)\alpha_1(g)+p\alpha_2(g)=\\
2(p+1)({(p+2)^2-2})({(p+1)((p+2)^2-2)/2}-1+|g|z).
\end{multline*}  
As above,
\begin{multline*}
2(p+1)({(p+2)^2-2})({(p+1)((p+2)^2-2)/2}-1+|g|z)=\\
(p+1)p(p+3)|\Omega|-(p+1)(p+2)\alpha_1(g)+p((p+2)((p+2)^2-2)-|\Omega|-\alpha_1(g))=\\
p(p^2+4p+2)|\Omega|-(p^2+4p+2)\alpha_1(g)+p(p+2)((p+2)^2-2)
\end{multline*} and  $\alpha_1(g)=p|\Omega|-2(p+1)({(p+1)((p+2)^2-2)/2}-1+|g|z)+p(p+2)$.

The lemma is proved.
\end{proof}

  \begin{lemma}\label{L4} If a subgroup $G\le \tilde{G}$ is transitive on the vertices of $\Theta$,
  then for each vertex $a\in \Theta$ the set  ${\rm Fix}(G_a)$ is an imprimitivity block of $G$, 
  the group $N_G(G_a)$ acts transitively on ${\rm Fix}(G_a)$ and $|{\rm Fix}(G_a)|=|N_G(G_a):G_a|$. 
  If moreover  $G_a\ne 1$, then the number $|{\rm Fix}(G_a)|$ does not exceed  $(p+2)^2-2$ and divides $(p+2)((p+2)^2-2)$.   
 \end{lemma}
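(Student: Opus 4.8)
The plan is to treat the three group-theoretic assertions uniformly (they hold for any transitive action) and only afterwards bring in the geometry of $\Theta$ for the two arithmetic claims. The starting point is the identification ${\rm Fix}(G_a)=\{b\in\Theta : G_b=G_a\}$. Indeed, if $b\in{\rm Fix}(G_a)$ then $G_a\le G_b$; transitivity together with the orbit--stabilizer theorem gives $|G_a|=|G|/v=|G_b|$, so $G_a=G_b$, while the reverse inclusion is immediate.

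Next I would translate by an arbitrary $g\in G$. Using $G_{b^g}=g^{-1}G_bg$ and $g^{-1}G_ag=G_{a^g}$ one obtains ${\rm Fix}(G_a)^g={\rm Fix}(G_{a^g})$. Any two of these translates either coincide or are disjoint, since a common point $c$ forces $G_a=G_c=G_{a^g}$ and hence equality of the two fixed-point sets; this is precisely the block property. The setwise stabilizer of the block equals $\{g : G_{a^g}=G_a\}=N_G(G_a)$, and $N_G(G_a)$ is transitive on it: for $b\in{\rm Fix}(G_a)$ choose $g$ with $a^g=b$, and then $G_a=G_b=g^{-1}G_ag$ gives $g\in N_G(G_a)$. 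Since the stabilizer of $a$ inside $N_G(G_a)$ is $N_G(G_a)\cap G_a=G_a$, orbit--stabilizer yields $|{\rm Fix}(G_a)|=|N_G(G_a):G_a|$.

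For the last two claims, divisibility is automatic: the translates ${\rm Fix}(G_{a^g})$ form a block system partitioning the $v=(p+2)((p+2)^2-2)$ vertices into blocks of one common size, so $|{\rm Fix}(G_a)|$ divides $v$. For the bound I would use $G_a\ne 1$: by Cauchy's theorem pick $g\in G_a$ of prime order, so that $a\in{\rm Fix}(g)$, whence ${\rm Fix}(g)\ne\varnothing$, and ${\rm Fix}(G_a)\subseteq{\rm Fix}(g)$ because $g\in G_a$. The fixed-point estimate used at the beginning of the proof of Lemma~\ref{L3}, namely \cite[Theorem 3.2]{BehLam} giving $|{\rm Fix}(g)|\le p\cdot v/(k-p)=(p+2)^2-2$, then forces $|{\rm Fix}(G_a)|\le (p+2)^2-2$.

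The block-theoretic part is entirely formal and appeals to nothing beyond transitivity; the only step that draws on $\Theta$ is the final inequality, which I regard as the main point to handle with care. One must ensure the Behbahani--Lam bound is applied to a genuine prime-order automorphism with nonempty fixed set -- this holds regardless of whether $p$ is a prime power, so the finer case analysis of Lemma~\ref{L3}(1) is not needed -- and one should note that the hypothesis $G_a\ne 1$ is essential, since for $G_a=1$ we have ${\rm Fix}(G_a)=\Theta$ and the bound fails while divisibility still holds.
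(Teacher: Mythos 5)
Your proof is correct and takes essentially the same route as the paper: the paper's own proof consists of citing \cite[Theorem 3.2]{BehLam} for the bound $(p+2)^2-2$ and \cite[Exercise 1.6.3]{DM} for the block-theoretic assertions, which are precisely the standard facts you prove from scratch (the identification ${\rm Fix}(G_a)=\{b\in\Theta : G_b=G_a\}$, the block property, transitivity of $N_G(G_a)$, the index formula, and divisibility of the block size). Your care in applying the Behbahani--Lam bound to a prime-order element of $G_a\ne 1$ matches how the paper uses that bound in the proof of Lemma~\ref{L3}.
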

 \begin{proof}
As it was noted in the proof of the previous lemma,  by \cite[Theorem 3.2]{BehLam} the order of every subgraph of fixed points of a non-trivial automorphism of $\Theta$ is not greater than $(p+2)^2-2$.
The remaining claims are just a corollary of well-known facts (eg., see \cite[Exercise 1.6.3]{DM}).
The lemma is proved. \end{proof}
\medskip
 
In Lemmas~\ref{L5} and \ref{L6} it is assumed that $p$ is a  prime power and $p>2$.
\begin{lemma}\label{L5} Let $(p+2)^2-2\in \pi(\widetilde{G})$, $g$ be an element of order $(p+2)^2-2$ of  $\tilde{G}$ and let   $f$ be an element of prime order of $C_{\tilde{G}}(g)$.
Then either $f\in \langle g\rangle$, or  $|f|<p, |f|$ divides $p+1$, ${\rm Fix}(f)$ is a regular graph on $(p+2)^2-2$ vertices, $\alpha_1(f)=(p+1)((p+2)^2-2)$ and every non-single point $\langle f\rangle$-orbit is a clique. 
 \end{lemma}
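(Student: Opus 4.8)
The plan is to exploit that, since $(p+2)^2-2\in\pi(\widetilde G)$, the number $q:=(p+2)^2-2$ is a prime; as $q$ is even precisely when $p$ is even, and then $q>2$, primality forces $p$ to be odd. Thus $g$ has prime order $q>p$. Applying Lemma~\ref{L3}(1) to $g$, the two options $|g|<p$ and $|g|=p$ both fail, so ${\rm Fix}(g)=\varnothing$; hence $g$ is semiregular and partitions the vertex set of $\Theta$ into $v/q=p+2$ orbits of size $q$, which I call \emph{blocks}. Moreover, by Lemma~\ref{L3}(2) (case $\Omega=\varnothing$, $|g|$ odd dividing $q$) one gets $\alpha_1(g)=q$. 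I record the consequence used repeatedly below, which is again just Lemma~\ref{L3}(1): \emph{every element of $\widetilde G$ of order $q$ is fixed-point-free}.

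Since $f$ centralises $g$, the subgraph ${\rm Fix}(f)$ is $\langle g\rangle$-invariant, hence a union of blocks, so $q\mid|{\rm Fix}(f)|$; on the other hand $|{\rm Fix}(f)|\le q$ by the bound of \cite{BehLam} used in Lemma~\ref{L3}. Therefore $|{\rm Fix}(f)|\in\{0,q\}$. Suppose first ${\rm Fix}(f)=\varnothing$; I aim to show $f\in\langle g\rangle$. As $f\in C_{\widetilde G}(g)$ it permutes the $p+2$ blocks. If $f$ fixes a block $B$ setwise, then $f|_B$ centralises the regular cyclic group $g|_B$, so $f|_B\in\langle g|_B\rangle$; writing $f|_B=g^j|_B$ with $j\not\equiv 0\pmod q$ (as $f$ is fixed-point-free), the restriction $f|_B$ has order $q$, forcing $|f|=q$. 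Then $f$ has order $q$ and permutes the $p+2<q$ blocks, so it fixes all of them; writing $f|_{B_i}=g^{j_i}|_{B_i}$, the element $fg^{-j_1}$ fixes $B_1$ pointwise and lies in $\langle g,f\rangle$, hence has order dividing $q$, and if non-trivial it has order $q$ and is fixed-point-free, a contradiction; so $f=g^{j_1}\in\langle g\rangle$. It remains to exclude that $f$ fixes no block; then all block-orbits have length $|f|$, so $|f|\mid p+2$ and $\langle g,f\rangle$ is cyclic of order $q|f|$ acting semiregularly. This is the main obstacle. The plan is to contradict the integrality of the eigenspace characters, using $\chi_1(g)=0$ and $\chi_2(g)=-1$ (which follow from ${\rm Fix}(g)=\varnothing$ and $\alpha_1(g)=q$ via Lemma~\ref{L1}), the values of $\chi_i(f)$ read off from Lemma~\ref{L3}(2)(ii), and the constraint that the common eigenvalues of $g$ and $f$ on each rational eigenspace impose through the semiregularity of $\langle g,f\rangle$. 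In the extremal subcase $|f|=p+2$ this is transparent: $\langle g,f\rangle$ would be regular, so $\Theta$ would be a circulant strongly regular graph, which is impossible by the classification of circulant strongly regular graphs; the general subcase $|f|\mid p+2$ is meant to follow from the same eigenvalue-multiplicity bookkeeping.

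Suppose now $|{\rm Fix}(f)|=q$, so ${\rm Fix}(f)$ is a single block. Then $g$ acts transitively on ${\rm Fix}(f)$, whence ${\rm Fix}(f)$ is a vertex-transitive, in particular regular, graph on $q=(p+2)^2-2$ vertices. Applying Lemma~\ref{L3}(1) to $f$, either $|f|<p$ or $|f|=p$; the latter forces $|{\rm Fix}(f)|\equiv 4\pmod p$, whereas $q\equiv 2\pmod p$, giving $p\mid 2$ against $p>2$. Hence $|f|<p$. The non-fixed vertices fall into $\langle f\rangle$-orbits of length $|f|$, and there are $(v-q)/|f|=(p+1)q/|f|$ of them, so $|f|\mid(p+1)q$; since $|f|$ is a prime smaller than $p<q$ and $q$ is prime, $|f|\mid p+1$. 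As in the fixed-point-free analysis, $f$ fixes no block other than ${\rm Fix}(f)$, so each non-fixed orbit meets $|f|$ distinct blocks.

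Finally I treat the clique claim and the value of $\alpha_1(f)$. The key point is that $f$ fixes ${\rm Fix}(f)$ \emph{pointwise}: if $y\in{\rm Fix}(f)\cap\Theta_1(x)$, then $y=y^f\in\Theta_1(x^f)$, so all vertices of a single non-fixed $\langle f\rangle$-orbit have exactly the same neighbours inside ${\rm Fix}(f)$. Using this together with $\lambda_\Theta=p-2$, $\mu_\Theta=p$ and the regularity of ${\rm Fix}(f)$, the plan is to show that two distinct vertices of one non-fixed orbit cannot lie at distance $2$, so that $\alpha_2(f)=0$ and each non-fixed orbit is a clique; then $\alpha_1(f)=v-|{\rm Fix}(f)|=(p+1)((p+2)^2-2)$, in agreement with the formula for $\alpha_1(f)$ in Lemma~\ref{L3}. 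I expect the two genuinely delicate points to be the exclusion of the block-transitive fixed-point-free possibility in the first case and this last distance/clique analysis; both amount to extracting combinatorial consequences from the very tight fixed-point bound $|{\rm Fix}|\le(p+2)^2-2$.
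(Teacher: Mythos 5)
Your skeleton coincides with the paper's up to the case split: $g$ is fixed-point-free by Lemma~\ref{L3}(1), ${\rm Fix}(f)$ is $\langle g\rangle$-invariant and bounded by $(p+2)^2-2$, hence $|{\rm Fix}(f)|\in\{0,s\}$ where $s=(p+2)^2-2$. Several of your sub-arguments are correct and even more elementary than the paper's (the centralizer-of-a-regular-cyclic-group argument showing that if $f$ stabilizes a block then $f\in\langle g\rangle$; the orbit count $(v-s)/|f|$ giving $|f|\mid p+1$). But the proposal has two genuine gaps, both of which you flag yourself, and both of which the paper closes with one further observation you never exploit: since $f$ and $g$ commute and $g$ is semiregular, not only ${\rm Fix}(f)$ but also the set $\{x: d(x,x^f)=1\}$ is a union of $\langle g\rangle$-orbits, so $s$ divides $\alpha_1(f)$. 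Fed into the formula of Lemma~\ref{L3}, and using that $s$ is prime and coprime to $|f|$ (so $s\mid z_1$), this pins $\alpha_1(f)$ down to at most two values in each case.

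First gap: the fixed-point-free subcase in which $f$ stabilizes no block, so $|f|\mid p+2$. You treat only the extremal possibility $|f|=p+2$, and that via the classification of circulant strongly regular graphs, an external theorem the paper never needs; the general case is left as ``meant to follow'' from unspecified eigenvalue bookkeeping. The paper's argument: $s\mid z_1$ gives $\alpha_1(f)=2(p+1)s\bigl(|f|z_1/s+(p+3)/2\bigr)+s$, and the bounds $0\le\alpha_1(f)\le v=(p+2)s$ leave only $\alpha_1(f)=s$ with $|f|\cdot(z_1/s)=-(p+3)/2$, contradicting $\gcd(|f|,p+3)=1$ since $|f|\mid p+2$. (Even quicker, in the paper's style: $\alpha_1(g)=s$, the set realizing it is $\langle f\rangle$-invariant, and $f$ is semiregular on it, so $|f|\mid s$, absurd.)

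Second gap: in the case $|{\rm Fix}(f)|=s$ you never actually prove $\alpha_1(f)=(p+1)s$ or the clique property, and your proposed route — ruling out distance $2$ inside an orbit via $\mu$-graph invariance — breaks for $|f|>2$, because $\Theta_1(x)\cap\Theta_1(x^f)$ is not $\langle f\rangle$-invariant (it is mapped to $\Theta_1(x^f)\cap\Theta_1(x^{f^2})$). The paper again uses $s\mid\alpha_1(f)$: here $\alpha_1(f)=(p+1)s\bigl(2|f|z_1/s+p+2\bigr)$, and $0\le\alpha_1(f)\le v-s=(p+1)s$ leaves either $\alpha_1(f)=0$ with $2|f|z_1/s=-(p+2)$, impossible by parity since $p+2$ is odd ($s$ prime forces $p$ odd), or $\alpha_1(f)=(p+1)s$ with $2|f|z_1/s=-(p+1)$, which also yields $|f|\mid(p+1)/2$. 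Applying the same dichotomy to every nontrivial power $f^k$ (same prime order, same fixed-point set) shows every non-fixed vertex is adjacent to all of its images, so the non-singleton orbits are cliques. Without the divisibility $s\mid\alpha_1(f)$, neither of your two open cases closes.
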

\begin{proof}  
Put $s=(p+2)^2-2$. 
As  $|{\rm Fix}(f)|$ is divisible by $s$, it follows by Lemma~\ref{L3} that   $|{\rm Fix}(f)|\in \{s,0\}$ and $$\alpha_1(f)=2(p+1)(|f|z_1+{(p+3)s/2})-(p+2)|{\rm Fix}(f)|+s$$ is divisible by $s$, hence $|f|z_1$ 
is divisible by $s$.
 
Suppose that ${\rm Fix}(f)=\varnothing$. If $|f|=s$ and $f\notin \langle g\rangle$, then by
Lemma~\ref{L3} the group $\langle f,g\rangle$ 
 acts semiregularly on the vertices of  $\Theta$, which contradicts to the fact  $s^2$ does not divide $(p+2)s$.
Suppose that $f\notin \langle g\rangle$.
 By Lemma~\ref{L3} $|f|$ divides $p+2$,  hence $z_1$ is divisible by $s$ and
 $$ \alpha_1(f)=2(p+1)s(|f|z_1/s+{(p+3)/2})+s\le v.$$ In case when $\alpha_1(f)\le (p+1)s$ 
 we get 
 $\alpha_1(f)=s$ and ${(p+3)/2}=-|f|z_1/s,$ but $(|f|,p+3)=1$, a contradiction.
 Hence $\alpha_1(f)=v$, every   $\langle f\rangle$-orbit is a clique and  $|f|\le p$.
 But by Lemma~\ref{L3}  $\alpha_1(g)=s$ is divisible by $|f|$, a contradiction.

Now let $|{\rm Fix}(f)|=s$. By Lemma~\ref{L3} $|f|\le p$,  hence $z_1$ is divisible by $s$
and 
 $$\alpha_1(f)=(p+1)s(2|f|z_1/s+p+3)-(p+1)s=(p+1)s(2|f|z_1/s+p+2)\le v-s
 .$$ Hence either $\alpha_1(f)=0$ and $2|f|z_1/s+p+2=0$, or $p+1= -2|f|z_1/s.$ 
Note that $|f|$ does not divide $p+2$  (otherwise the number $p+2$ would be even, which is impossible since by the assumption $s$ is a prime).
Hence  $|f|$ divides $p+1$, $\alpha_1(f)=v-s=(p+1)s$ and every non-single point  $\langle f\rangle$-orbit
is a clique.    

The lemma is proved. 
\end{proof}

  \begin{lemma}\label{L6}  Let $a\in \Theta$ and suppose $G$ is a subgroup of $\tilde{G}$ that is transitive on the vertices of  $\Theta$  and $s=(p+2)^2-2\in \pi({G})$. Then the following statements hold.
  \begin{itemize}
\item[$(1)$] If $G_a\ne 1$, then either $|{\rm Fix}(G_a)|=s$ and $\pi(G_a)\subseteq \pi(p+1)$,
or  $|{\rm Fix}(G_a)|$ divides $p+2$.  

 \item[$(2)$]  If the group $G$ is solvable,
then either \begin{itemize}
  \item[$(i)$] $O_s(G)\ne 1$, $p+2$ is a power of $3$, $s\equiv 1 \pmod 3$ and  $\pi(G_a)\subseteq \pi((p+1)(p^2+4p+1))$, 
  or \item[$(ii)$] $O_s(G)=1$, $G$ contains a minimal normal subgroup of order $t^e$, where $t\in \pi(p+2)$ and $e\ge 2$,
$t^e\equiv 1\pmod s, $ $s$ divides $|GL_e(t)|$ and the number   $p+2$ is composite.
  \end{itemize}
\end{itemize} 
\end{lemma}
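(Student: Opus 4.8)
The plan rests on two preliminary observations. Since $s\in\pi(G)$ the number $s=(p+2)^2-2$ is prime, and transitivity of $G$ gives $v=(p+2)s\mid|G|$. Moreover $p$ must be odd (otherwise $s$ would be even, hence $s=2$, impossible for $p>2$), so $s$ is an odd prime and $\gcd(p+2,s)=1$ (as $s\equiv-2\pmod{p+2}$ and $p+2$ is odd). First I would pin down the Sylow $s$-subgroup: an element $g$ of order $s$ exists by Cauchy, and since $|g|=s>p$ Lemma~\ref{L3} forces $\mathrm{Fix}(g)=\varnothing$, so $g$ is semiregular; the same holds for every element of $s$-power order, a suitable power of it having order $s$. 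Hence a Sylow $s$-subgroup acts semiregularly, and its order, being a power of $s$ dividing $(p+2)s$, equals $s$. The identical fixed-point argument applied inside $G_a$ shows $s\nmid|G_a|$, so $\gcd(|G_a|,s)=1$. Both facts are used throughout.

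For statement (1) I would invoke Lemma~\ref{L4}: $F:=\mathrm{Fix}(G_a)$ is a block with $|F|\le s$ and $|F|\mid v=(p+2)s$. As $s$ is prime and coprime to $p+2$, the divisors of $v$ not exceeding $s$ are precisely the divisors of $p+2$ together with $s$; thus $|F|\mid(p+2)$ or $|F|=s$. In the latter case the point is that every nontrivial $h\in G_a$ satisfies $\mathrm{Fix}(h)\supseteq F$ and $|\mathrm{Fix}(h)|\le s=|F|$ by \cite[Theorem 3.2]{BehLam}, so $\mathrm{Fix}(h)=F$. Therefore $G_a$ acts \emph{semiregularly} on the $(p+1)s$ vertices outside $F$, whence $|G_a|\mid(p+1)s$; together with $\gcd(|G_a|,s)=1$ this gives $|G_a|\mid(p+1)$, in particular $\pi(G_a)\subseteq\pi(p+1)$.

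For statement (2) I would dichotomize on $O_s(G)$. A nontrivial normal $s$-subgroup lies in a Sylow $s$-subgroup of order $s$, so $O_s(G)\ne1$ forces $O_s(G)=\langle g\rangle\trianglelefteq G$. Then $C:=C_G(g)\trianglelefteq G$ with $G/C\hookrightarrow\mathrm{Aut}(\langle g\rangle)\cong\mathbb{Z}_{s-1}$, while Lemma~\ref{L5} says every prime-order element of $C\le C_{\widetilde{G}}(g)$ lies in $\langle g\rangle$ or has order dividing $p+1$; hence $\pi(C)\subseteq\{s\}\cup\pi(p+1)$ and $\pi(G/C)\subseteq\pi(s-1)=\pi(p^2+4p+1)$, so $\pi(G)\subseteq\{s\}\cup\pi(p+1)\cup\pi(p^2+4p+1)$. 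The crux is arithmetic: $\pi(p+2)\subseteq\pi(G)$, and since $p+2$ is coprime to $s$ and to $p+1$, we get $\pi(p+2)\subseteq\pi(p^2+4p+1)$; but $p^2+4p+1=(p+2)^2-3$, so $\gcd(p+2,p^2+4p+1)\mid3$, forcing $p+2$ to be a power of $3$ and hence $s\equiv1\pmod3$. Discarding the prime $s$, which does not divide $|G_a|$, yields $\pi(G_a)\subseteq\pi((p+1)(p^2+4p+1))$, i.e. case (i).

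If instead $O_s(G)=1$, solvability furnishes a minimal normal subgroup $M$ that is elementary abelian of order $t^e$. Here $t\ne s$, and as the $M$-orbits have $t$-power size dividing $v$ and coprime to $s$, we get $t\mid(p+2)$, so $t\in\pi(p+2)$. The element $g$ acts coprimely on $M$; were $C_M(g)\ne1$ it would contain an element of order $t$ commuting with $g$, giving $t\mid(p+1)$ by Lemma~\ref{L5} and contradicting $t\mid(p+2)$. Hence $C_M(g)=1$, $g$ is fixed-point-free on $M$, so $t^e\equiv1\pmod s$; in particular $g$ acts nontrivially, its image in $\mathrm{Aut}(M)\cong GL_e(t)$ has order $s$, so $s\mid|GL_e(t)|$ and $e\ge2$ (since $|GL_1(t)|=t-1<s$). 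Finally, if $p+2=t$ were prime then all $M$-orbits would have size dividing $t$, so each stabilizer $M_x$ would have order $t^{e-1}\ne1$, and a nontrivial element of $M_x$ would fix $x$ while having order $t=p+2>p$, contradicting Lemma~\ref{L3}; thus $p+2$ is composite, which is case (ii). I expect the main obstacle to be the case (i) analysis—assembling the global bound $\pi(G)\subseteq\{s\}\cup\pi(p+1)\cup\pi(p^2+4p+1)$ from the centralizer $C_G(g)$ and the automizer $G/C_G(g)$, and then extracting $p+2=3^k$ from the identity $\gcd(p+2,(p+2)^2-3)\mid3$; the coprime-action bookkeeping in case (ii) is routine by comparison.
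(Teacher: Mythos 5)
Your proof is correct, and its skeleton coincides with the paper's: statement (1) via Lemma~\ref{L4} plus the fixed-point bound of \cite[Theorem 3.2]{BehLam} (your whole-group semiregularity phrasing even gives the slightly stronger conclusion $|G_a|\mid p+1$), and statement (2) via the dichotomy on $O_s(G)$, with Lemma~\ref{L5} controlling centralizers; your treatment of the case $O_s(G)=1$ matches the paper step for step, including the orbit-stabilizer contradiction with Lemma~\ref{L3} that forces $p+2$ to be composite, which the paper only states tersely. The one genuine divergence is in case (2)(i): the paper, using solvability, takes a Hall $\{s,t\}$-subgroup for each $t\in\pi(p+2)$, produces an element $f$ of order $t$ with $f\notin C_G(g)$ by Lemma~\ref{L5}, and concludes $t\mid s-1$, hence $t=3$ from $\gcd(p+2,p^2+4p+1)\mid 3$; you instead bound the whole spectrum, $\pi(G)\subseteq\pi(C_G(g))\cup\pi(G/C_G(g))\subseteq\{s\}\cup\pi(p+1)\cup\pi(p^2+4p+1)$, and then use transitivity ($\pi(p+2)\subseteq\pi(G)$) together with coprimality to push $\pi(p+2)$ into $\pi(p^2+4p+1)$ before invoking the same gcd identity. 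Your route avoids Hall's theorem entirely (solvability enters only to produce the elementary abelian minimal normal subgroup in the other branch), while the paper's route is more local, working with a single element rather than a bound on all of $\pi(G)$; both ultimately rest on the same two pillars, namely Lemma~\ref{L5} and the identity $p^2+4p+1=(p+2)^2-3$, so the difference is one of bookkeeping rather than substance.
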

 \begin{proof}
Put $s=(p+2)^2-2$ and $X=G_a$ for a vertex $a\in \Theta$. 
Suppose  $G$ is a subgroup of  $\tilde{G}$ that is transitive on the vertices of  $\Theta$
and  $g$ is an element of order  $s$ of ${G}$.
 
Let $X\ne 1$. Then by Lemma~\ref{L5}  $\pi(C_X(g))\subseteq \pi(p+1)$ and by Lemma~\ref{L4} the number  
 $|{\rm Fix}(X)|$ equals  $s$ or divides $p+2$.
If $|{\rm Fix}(X)|=s$, then ${\rm Fix}(X)$ is  $\langle g\rangle$-invariant,
that is the group $\langle g\rangle$ acts regularly on the vertices of the graph ${\rm Fix}(X)$ and normalizes $X$.
In this case for each element $f$ of prime order of  $X$ we have ${\rm Fix}(X)={\rm Fix}(f)$
and hence $s\equiv s(p+2)\pmod{|f|},$ thus $|f|$ divides $p+1$ and  $\pi(X)\subseteq \pi(p+1)$.
The statement (1) is proved. 

Let us prove statement (2). Suppose  $G$ is solvable.  
Then  $O_n(G)\ne 1$ for some $n\in \pi({G})$.
Since the length of $O_n(G)$-orbit on the vertices of  $\Theta$ divides $s(p+2),$
it follows that either $n=s$, or $n$ divides $p+2$.  

Further, $G$ contains a Hall $\{s,t\}$-subgroup $H$ for each $t\in \pi(p+2)$.
Let $g$ and  $f$ be some elements of orders $s$ and  $t$ of  $H$, respectively. 
 
Suppose $O_s(G)\ne 1$. We may assume that $\langle g\rangle=O_s(G)$. By Lemma~\ref{L5} 
    $f\notin C_G(g)$ and hence $|f|$ divides $s-1$. Since $(p^2+4p+1,p+2)$ divides 3,
   $|f|=3$. Thus, $p+2$ is a power of 3 and $s\equiv 1 \pmod 3$. 
Again by Lemma~\ref{L5} $\pi(C_X(g))\subseteq \pi(p+1)$ and since 
$X/C_X(g)\le Z_{s-1}$, we get  $\pi(X)\subseteq \pi((p+1)(p^2+4p+1))$. 
 
Now let $O_s(G)=1$. Then $G$ contains a minimal normal subgroup $N$ of order $t^e$
for some $t\in \pi(p+2)$. 
By Lemma~\ref{L5} $C_N(g)=1$. Hence
$t^e\equiv 1\pmod s, $ 
$\langle g\rangle\le GL_e(t)$ and $e\ge 2$. In particular, by Lemma~\ref{L3} the number   $p+2$ 
is composite.
The lemma is proved.
\end{proof}

\section{Automorphisms of ${\rm AT4}(p,p+2,r)$-graphs}\label{Sec2}

Further in this section we suppose that  $\Gamma$ is an ${\rm AT4}(p,p+2,r)$-graph, $\bar{\Gamma}$ is the antipodal quotient of  $\Gamma$, $G={\rm Aut}(\Gamma)$, 
   $F\in {\mathcal F}(\Gamma), a\in F$,  $\Phi=\Gamma_2(a)$,   $\Theta^i$ is the $i$-neighborhood of a vertex $F$ of   $\bar\Gamma$  and  ${\mathcal F}_i$ is that the set of those antipodal classes of $\Gamma$, which intersect   $\Gamma_i(a)$, where $i\in\{1,2\}$.
By $G_X$ ($G_{\{X\}}$) we denote pointwise (respectively, global) stabilizer of a subset  $X$ of vertices of  $\Gamma$ in $G$.

It is known (see~\cite{JKT,Jur}), that each  ${\rm AT4}(p,q,r)$-graph is 1-homogeneous in the sense of Nomura,
hence for each triple $\{x,y,z\}$ of vertices  of   $\Gamma$ such that
$d(x,y)=1$ and $d(x,z)=d(y,z)=2$,  we have $$|\Gamma_1(x)\cap \Gamma_1(y)\cap \Gamma_1(z)|=c_2(a_1-p)/a_2=2(p+1)/r.$$

 \begin{lemma}\label{L7} Let $K_i$ denote the kernel of the action of $G_{\{F\}}$ on ${\mathcal F}_i$,
 where $i\in \{1,2\}$. Then $K=K_1\cap K_2$
 is the kernel of the action of $G$ on ${\mathcal F}(\Gamma)$ and the following statements hold: 
 \begin{itemize}
 \item[$(1)$]   $G_a\cap K=1$ and $|K|$ divides $r$;
 \item[$(2)$] $G_{\Gamma_{i}(a)}=1$, $G_a\le {\rm Aut}(\Theta^i)$ for all $i\in \{1,2\}$ and $G_a\le {\rm Aut}(\Phi)$;
  \item[$(3)$] $K_1=K_2=K$ and $G_{\{F\}}/K\le {\rm Aut}(\bar{\Phi})$.
 \end{itemize} 
\end{lemma}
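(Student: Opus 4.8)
The plan is to exploit everywhere that $\Gamma$ is an antipodal $r$-cover of the strongly regular graph $\bar\Gamma$, so that every $g\in G$ permutes the antipodal classes and induces an automorphism $\bar g$ of $\bar\Gamma$; with $K$ the kernel of $G\to\mathrm{Aut}(\bar\Gamma)$, the three assertions become statements about this covering together with a local statement about the graph $\Theta\cong\Gamma_1(a)\cong\bar\Gamma_1(F)$. Two identifications drive everything. Since adjacent vertices lie in distinct antipodal classes and two adjacent classes induce a perfect matching, the map $x\mapsto F_x$ is a graph isomorphism $\Gamma_1(a)\to\mathcal F_1=\Theta^1$, and the analogous map realizes $\mathcal F_2=\Theta^2=\bar\Gamma_2(F)$ (surjectivity onto the distance-$2$ classes by lifting $2$-paths of $\bar\Gamma$); as $\bar\Gamma$ has diameter $2$, the set $\{F\}\cup\mathcal F_1\cup\mathcal F_2$ exhausts $\mathcal F(\Gamma)$. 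For statement $(1)$ I would show that $K$ acts semiregularly on vertices: if $g\in K$ fixes $a$, then for each neighbour $b$ the matching between $F$ and $F_b$ forces $g(b)$ to be the unique neighbour of $a$ in $F_b$, namely $b$, and connectedness gives $g=1$. Hence $G_a\cap K=1$, and since $K$ stabilizes $F$ and acts freely on it, $|K|$ divides $|F|=r$.

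For $(2)$ and $(3)$ I would separate an \emph{easy} half, provable by degree counting, from a \emph{hard} half that reduces to a single local rigidity fact. An automorphism fixing $\Gamma_1(a)$ pointwise must fix $a$, since its image $a'=g(a)$ is adjacent to all of $\Gamma_1(a)$, forcing $|\Gamma_1(a)\cap\Gamma_1(a')|=k$, impossible for $a'\ne a$ because two distinct vertices share at most $\lambda=p(p+3)<k$ common neighbours; thus $G_{\Gamma_1(a)}\le G_a$. If instead $g$ fixes $\Gamma_2(a)$ pointwise, then each $x\in\Gamma_1(a)$ has $b_1=(p+3)(p+1)^2$ neighbours in $\Gamma_2(a)$, all fixed, and since common neighbours only occur at distance at most $2$ and number at most $\max(\lambda,\mu)=\lambda<b_1$, we get $g(x)=x$; hence $g$ fixes $\Gamma_1(a)$, then $a$, hence all of $\Gamma_{\le2}(a)$, so $g$ stabilizes every class of $\{F\}\cup\mathcal F_1\cup\mathcal F_2=\mathcal F(\Gamma)$, i.e. $g\in G_a\cap K=1$. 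This proves $G_{\Gamma_2(a)}=1$, whence $G_a\le\mathrm{Aut}(\Phi)$. The same count inside $\bar\Gamma$ (each $F'\in\bar\Gamma_1(F)$ has $k_{\bar\Gamma}-1-\lambda_{\bar\Gamma}$ neighbours in $\bar\Gamma_2(F)$, far more than $\max(\lambda_{\bar\Gamma},\mu_{\bar\Gamma})$) gives $K_2\le K$, so $K_2=K$; consequently the kernel of $G_a$ on $\mathcal F_2=\Theta^2$ is $G_a\cap K=1$, giving $G_a\le\mathrm{Aut}(\Theta^2)$, while $G_{\{F\}}/K$ embeds into $\mathrm{Aut}(\bar\Phi)$ because $\bar\Phi=\bar\Gamma_2(F)$.

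The remaining claims $G_{\Gamma_1(a)}=1$, $K_1=K$ and $G_a\le\mathrm{Aut}(\Theta^1)$ all reduce to one local rigidity fact: \emph{an automorphism of $\Theta$ fixing a vertex $u$ together with all of $\Theta_1(u)$ is the identity.} Indeed, if $g$ fixes $\Gamma_1(a)$ pointwise (so $g\in G_a$), then for each neighbour $b$ the restriction $g|_{\Gamma_1(b)}$ is an automorphism of $\Theta\cong\Gamma_1(b)$ fixing $a$ and all of its $\lambda=p(p+3)=k_\Theta$ neighbours in $\Gamma_1(b)$, i.e. a vertex and its whole neighbourhood; the rigidity fact then forces $g$ to fix every $\Gamma_1(b)$ pointwise, hence $\Gamma_{\le2}(a)$, and we finish via $G_a\cap K=1$. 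The identical mechanism in $\bar\Gamma$ (whose local graph is again $\Theta$, with $F$ playing the role of $u$ inside $\bar\Gamma_1(F')$) yields $K_1=K$, and then $G_a\le\mathrm{Aut}(\Theta^1)$ as in the previous paragraph. To prove the rigidity fact I would pass to a power of prime order and apply Lemma~\ref{L3} to $\Omega=\mathrm{Fix}(g|_\Theta)\supseteq\{u\}\cup\Theta_1(u)$: the order is at most $p$, and if it equals $p$ then the component of $\Omega$ containing $u$ would be amply regular of valency $|\Theta_1(u)|=p(p+3)$, exceeding the maximal admissible valency $p(p+2)$ from Lemma~\ref{L3}$(1)(ii)$, a contradiction.

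The rigidity fact is where the main difficulty lies, and it is genuinely parameter-dependent: for general strongly regular graphs it fails (complete multipartite graphs with parts of size at least three admit nonidentity automorphisms fixing a vertex and its entire neighbourhood). The order-$p$ case is settled by the valency contradiction above; the delicate point is the elements of prime order $q<p$, for which Lemma~\ref{L3} supplies no component structure. Here I expect to have to combine the fixed-point bound $|\Omega|\le(p+2)^2-2$ with the inherited local parameters $\lambda_\Omega=p-2$, $\mu_\Omega=p$ and with the bound $|N(T)|\le p$ on the common neighbourhood of a $\mu$-set $T\subseteq\Theta_1(u)$ (valid since any two vertices of $T$ share at most $\max(\lambda_\Theta,\mu_\Theta)=p$ neighbours) in order to exclude moved vertices at distance $2$ from $u$. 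Showing that no such nonidentity small-prime automorphism of $\Theta$ can fix an entire ball of radius one is, I anticipate, the crux of the whole lemma.
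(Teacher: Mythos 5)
Your statement $(1)$ and the ``easy half'' of $(2)$--$(3)$ are correct: the matching-plus-connectedness proof that $K$ acts semiregularly (hence $G_a\cap K=1$ and $|K|$ divides $r$), the common-neighbour counts showing $G_{\Gamma_1(a)}\le G_a$, $G_{\Gamma_2(a)}=1$ and $K_2=K$, and the resulting embeddings $G_a\le {\rm Aut}(\Phi)$, $G_a\le{\rm Aut}(\Theta^2)$, $G_{\{F\}}/K\le{\rm Aut}(\bar\Phi)$ all go through, and are in the same spirit as (in places more elementary than) the paper's argument. The genuine gap is in your ``hard half''. You reduce $G_{\Gamma_1(a)}=1$, $K_1=K$ and $G_a\le{\rm Aut}(\Theta^1)$ to a rigidity fact for the \emph{abstract} strongly regular graph $\Theta$ (an automorphism fixing a closed ball of radius one is trivial), but you prove this only for elements of order $p$, where the valency contradiction with Lemma~\ref{L3}$(1)(ii)$ indeed works. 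For prime orders $q<p$ you offer a list of ingredients and say yourself that this case is ``the crux''. It is, and the ingredients you name do not close it: if $g$ fixes $\{u\}\cup\Theta_1(u)$ pointwise, what they yield is that every nontrivial orbit on $\Theta_2(u)$ is a coclique whose members pairwise have exactly one and the same $\mu$-set $T\subseteq\Theta_1(u)$ as their full set of common neighbours, whence (via $|N(T)|\le p$) merely $|g|\le p-1$; this picture is internally consistent, and the second subconstituent of an abstract SRG with these parameters carries no known structure that would rule it out.

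The paper avoids this difficulty entirely: it never proves (or needs) rigidity for $\Theta$. Instead it shows that no nontrivial automorphism $\tilde g$ of $\bar\Gamma$ can fix $\bar\Gamma_i(\bar b)$ pointwise, and the proof leans on Proposition~\ref{P1}$(3)$: the second subconstituent of $\bar\Gamma$ is a \emph{connected strongly regular} graph with $\mu_{\bar\Gamma_2(\bar a)}=2p(p+1)>0$. Concretely, if $\bar\Gamma_1(\bar a)$ is fixed pointwise and $\bar b\in\bar\Gamma_2(\bar a)$ is moved, then $\bar b$ and $\bar b^{\tilde g}$ share all $\mu_{\bar\Gamma}=2(p+1)(p+2)>\lambda_{\bar\Gamma}$ common neighbours inside $\bar\Gamma_1(\bar a)$, so they are nonadjacent and have no common neighbour outside $\bar\Gamma_1(\bar a)$; but inside the connected SRG $\bar\Gamma_2(\bar a)$ they lie at distance $2$ and hence have $2p(p+1)>0$ common neighbours there --- a contradiction. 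This tool exists for $\bar\Gamma$ (by AT4 theory) but not for the abstract local graph $\Theta$, which is exactly where your reduction loses the needed information. Note that in both places where you invoke rigidity, the automorphism is induced by an element of ${\rm Aut}(\Gamma)$ or ${\rm Aut}(\bar\Gamma)$, so the repair is simply not to descend to $\Theta$: if $g$ fixes $\{a\}\cup\Gamma_1(a)$ pointwise, then $\bar g$ fixes $\{F\}\cup\bar\Gamma_1(F)$ pointwise, the quotient argument gives $\bar g=1$, hence $g\in K\cap G_a=1$; applying the same argument to $\bar g$ for $g\in K_1$ gives $K_1=K$, and then $G_a\le{\rm Aut}(\Theta^1)$ follows as in your own text.
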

\begin{proof}
 Suppose that $1\ne g\in G_a\cap K$. Then $\Gamma_1(a)\subset{\rm Fix}(g)$. 
If there is a vertex $b\in \Gamma_2(a)-{\rm Fix}(g)$, then we  get $\Gamma_1(b)\cap \Gamma_1(b^g)\cap {\rm Fix}(g)\ne \varnothing$, which  means   $d({b},{b^g})\le 2$, a contradiction.  Hence   $\Gamma_2(a)\subset{\rm Fix}(g)$. In  a similar way, we obtain $\Gamma_3(a)\subset{\rm Fix}(g)$. But then $\Gamma={\rm Fix}(g)$, again a contradiction. 
 
Let $\tilde g\in {\rm Aut}(\bar{\Gamma})$ be an element of prime order and $\Omega={\rm Fix}(\tilde g)$.  
For a vertex $x$ of $\Gamma$ by  $\bar{x}$ we will denote a vertex of  $\bar{\Gamma}$, whose preimage in  $\Gamma$ (i.e. antipodal class) contains  $x$.   
Let us show that if  $\Omega\neq \varnothing$, then for each vertex $\bar{b}$ of $\bar{\Gamma}$  and for all $i\in\{1,2\}$ we have $\bar{\Gamma}_i(\bar{b})\nsubseteq \Omega$.

By Proposition~\ref{P1}, $\bar \Gamma$ is strongly regular with parameters 
  $$((p+1)^2(p+4)^2/2,(p+2)(p^2+4p+2),p(p+3),2(p+1)(p+2))$$ and non-principal eigenvalues 
  $p,-(p^2+4p+4)$.  
Let $\bar{a}\in \Omega\neq \varnothing$.  Then in view of \cite[Theorem 3.2]{BehLam} 
$$|\Omega|\le (p+1)(p+2)\frac{(p+1)^2(p+4)^2}{(p+2)(p^2+4p+2)-p}.$$  
By Proposition~\ref{P1},     $\bar{\Gamma}_2(\bar{a})$  is strongly regular with parameters 
 $$((p+1)(p+3)(p^2+4p+2)/2,p(p+2)^2,p^2+p-2,2p(p+1)).$$ 
Hence $\bar{\Gamma}_2(\bar{a})\nsubseteq \Omega$.   
Suppose that  $\bar{\Gamma}_1(\bar{a})\subseteq \Omega$. Then for a vertex
 $\bar{b}\in \bar{\Gamma}_2(\bar{a})-\Omega$ we have $|\bar{\Gamma}_1(\bar{b})\cap \bar{\Gamma}_1(\bar{b}^g)\cap \Omega|=2(p+1)(p+2)$.  But the vertices $\bar{b}$ and $\bar{b}^{\tilde g}$ are at distance at most  2 in  $\bar{\Gamma}_2(\bar{a})$, which is impossible.
    
Now suppose that $\bar{\Gamma}_i(\bar{b})\subseteq \Omega$ for a vertex $\bar{b}$ of $\bar{\Gamma}$ 
and some $i\in \{1,2\}$.
Since   $\bar{b}\notin \Omega$, the vertices  $\bar{b}$ and $\bar{b}^{\tilde g}$ have $|\bar{\Gamma}_1(\bar{b})|$  common neighbors in $\bar{\Gamma}$ or $|\bar{\Gamma}_2(\bar{b})|$ common neighbors in $\bar{\Gamma}_2$,  
a contradiction. 

Hence, $K_i=K$,
  $G_{\Gamma_i(a)}\unlhd G_a$ and  $G_{\Gamma_i(a)}$ fixes each antipodal class of $\Gamma$.
By statement $(1)$ it follows that  $G_{\Gamma_i(a)}=1$.
Therefore,  
$G_a\simeq G_aK/K\leq G_{\{F\}}/K\le {\rm Aut}(\Theta^i)$ and, similarly we get, $G_a\le {\rm Aut}(\Phi)$.
The lemma is proved.
\end{proof}

\begin{lemma}\label{L8} Let  $g\in G$ be an element of prime order and $\Omega={\rm Fix}(g)$. 
Then $|\Omega|\le{r(p+1)(p+2)(p+4)}$ and the following statements hold:

\begin{itemize}
\item[$(1)$] if $\Omega\neq \varnothing$, $p$ is a prime power and $p>2$,  then either
\begin{itemize}
\item[$(i)$]   $|g|\le p$, or
\item[$(ii)$]  $|g|=p+2$ and  $\Omega$ --- $2r$-coclique, which is a union of two antipodal classes of $\Gamma$,  
or
\item[$(iii)$]    $|g|>p, |g|$ divides $(p+2)^2-2$ and, in particular, if  $|g|=(p+2)^2-2$, then $\Omega$ is
an antipodal class of $\Gamma$;

\end{itemize}
\item[$(2)$] if $\Omega= \varnothing$, then $|g|$ divides $(p+1)(p+4)$.
 
  \end{itemize}
\end{lemma}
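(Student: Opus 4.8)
The plan is to split on whether $g$ has a fixed vertex, to reduce the non-empty case to the local graph $\Theta=\Gamma_1(a)$ via the faithful action of Lemma~\ref{L7}, and then to transport information about $|g|$ back to $\Gamma$ and to its quotient $\bar\Gamma$. First I would dispose of the global bound together with statement $(2)$. If $\Omega=\varnothing$ the bound is vacuous, and $g$ is fixed-point-free of prime order, hence semiregular, so $|g|$ divides $|V(\Gamma)|=r(p+1)^2(p+4)^2/2$; since $r\mid 2(p+1)$ by Proposition~\ref{P1} and one of $p+1,p+4$ is even, every prime divisor of $|V(\Gamma)|$ divides $(p+1)(p+4)$, which is $(2)$. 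If $\Omega\neq\varnothing$, fix $a\in\Omega$; then $g\in G_a$, and as $G_a\cap K=1$ by Lemma~\ref{L7}(1), the induced automorphism $\bar g$ of $\bar\Gamma$ is non-trivial of order $|g|$. Distinct fixed vertices lie in antipodal classes mapping to fixed vertices of $\bar g$, so $|\Omega|\le r\,|{\rm Fix}(\bar g)|$. Applying \cite[Theorem 3.2]{BehLam} to the strongly regular graph $\bar\Gamma$ of Proposition~\ref{P1}(2) and simplifying (the denominator $k-\theta_2$ equals $(p+1)^2(p+4)$) gives $|{\rm Fix}(\bar g)|\le(p+1)(p+2)(p+4)$, whence the stated bound.

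For statement $(1)$ I would use that, by Lemma~\ref{L7}(2), $G_a$ acts faithfully on $\Gamma_1(a)\cong\Theta$, so $g$ induces a prime-order automorphism of $\Theta$ with fixed set $\Gamma_1(a)\cap\Omega$, to which Lemma~\ref{L3} applies. If $g$ fixes a neighbour of $a$, then Lemma~\ref{L3}(1) forces $|g|\le p$, which is case $(i)$. If $g$ fixes no neighbour of $a$, then Lemma~\ref{L3}(2) gives $|g|=2$, or $|g|$ an odd divisor of $p+2$, or $|g|$ an odd divisor of $(p+2)^2-2$. Comparing each with $p$ sorts these into the three alternatives: $|g|=2$ and every proper prime divisor of $p+2$ are $\le p$ (case $(i)$); $|g|=p+2$, possible only when $p+2$ is prime, is case $(ii)$; a prime divisor of $(p+2)^2-2$ is case $(i)$ when $\le p$ and case $(iii)$ otherwise. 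Note $p+2\nmid(p+2)^2-2$, so the two ``large'' branches do not collide.

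It remains to establish the geometric descriptions in $(ii)$ and $(iii)$. In both, $\gcd(|g|,r)=1$, because $|g|\in\{p+2,(p+2)^2-2\}$ is coprime to $2(p+1)$ and $r\mid 2(p+1)$; hence every $g$-invariant antipodal class is fixed pointwise, so $\Omega$ is a union of antipodal classes (in particular $F\subseteq\Omega$) and ${\rm Fix}(\bar g)=\overline{\Omega}$. Since $|g|>p$, Lemma~\ref{L3}(1) applied at \emph{any} fixed vertex shows $g$ fixes no neighbour of it, so $\Omega$ is a coclique; thus no fixed class is adjacent to $F$ in $\bar\Gamma$, and as quotient-distance $2$ in $\bar\Gamma$ corresponds exactly to distance $2$ in $\Gamma$, each fixed class other than $F$ lies inside $\Gamma_2(a)$. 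For case $(iii)$ with $|g|=(p+2)^2-2$ this closes the argument: a fixed $c\in\Gamma_2(a)$ would make the $c_2=2(p+1)(p+2)/r$ common neighbours of $a$ and $c$ a $g$-invariant, fixed-point-free subset of $\Gamma_1(a)$, forcing $|g|\mid c_2$; but $c_2<(p+1)(p+2)<(p+2)^2-2=|g|$, a contradiction, so $\Omega=F$.

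The main obstacle is the exact count in case $(ii)$, where $|g|=p+2<c_2$ and the divisibility above no longer excludes fixed vertices at distance $2$; indeed one fixed class there must occur. Here I would compute $|\overline{\Omega}|$ precisely rather than merely bound it: the permutation character of $\Gamma$ on its five eigenspaces (as in Lemma~\ref{L1}) yields congruences modulo $p+2$ on the distance distribution $(\alpha_i(g))$ of $g$, giving in particular $|\Omega|=\alpha_0(g)\equiv 2r\pmod{p+2}$. Combining these congruences with the bound $|\Omega|\le r(p+1)(p+2)(p+4)$ and with the facts already proved—that $\Omega$ is a coclique which is a union of distance-$2$ antipodal classes containing $F$—I expect to force $|\overline{\Omega}|=2$, so that $\Omega=F\cup F'$ is a $2r$-coclique. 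Pinning down this count, i.e.\ ruling out the intermediate residues $\equiv 2\pmod{p+2}$ lying below the bound, is the delicate step and the crux of the proof.
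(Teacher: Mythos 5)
Your proposal handles most of the lemma correctly and essentially along the paper's lines: the global bound via \cite[Theorem 3.2]{BehLam} applied to $\bar\Gamma$ (legitimate since $G_a\cap K=1$ forces $g\notin K$), statement $(2)$ by semiregularity, the trichotomy $|g|\le p$ / $|g|=p+2$ / $|g|\mid (p+2)^2-2$ via the faithful action on $\Gamma_1(a)$ and Lemma~\ref{L3}, and case $(iii)$ for $|g|=(p+2)^2-2$ (your coclique-plus-divisibility argument, $|g|\nmid c_2$, is the same idea as the paper's $x_1=x_2=x_3=0$ computation). But there is a genuine gap at case $(1)(ii)$, and you flag it yourself: you never prove that for $|g|=p+2$ the fixed set is exactly two antipodal classes. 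Your proposed tool --- permutation-character/orbit congruences modulo $p+2$ --- only yields that the number $N$ of fixed antipodal classes satisfies $N\equiv 2\pmod{p+2}$, which combined with the bound $N\le (p+1)(p+2)(p+4)$ still leaves $N\in\{2,\,p+4,\,2p+6,\dots\}$; congruences alone cannot close this. Worse, the divisibility trick that saved you in case $(iii)$ fails here structurally: two fixed classes in $\Gamma_2(a)$ give fixed vertices at distance $2$ in $\Gamma$, whose common neighbourhood has size $c_2=2(p+1)(p+2)/r$, and $p+2$ \emph{does} divide this number, so no contradiction arises inside $\Gamma$.

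The paper's proof supplies precisely the missing ingredient: it descends to the second subconstituent $\Phi=\Gamma_2(a)$, which by Proposition~\ref{P1}(4) is an antipodal distance-regular graph with $c_2(\Phi)=2p(p+1)/r$, a number \emph{coprime} to $p+2$. First, $p+2\nmid|\Phi|$ forces $x_2>0$, i.e.\ at least one fixed class inside $\Gamma_2(a)$. Then, for a fixed $x\in\Phi$, a fixed edge of $\Phi$ is excluded using the $1$-homogeneity count $|\Gamma_1(x)\cap\Gamma_1(y)\cap\Gamma_1(a)|=2(p+1)/r$ (a $g$-invariant, fixed-point-free subset of $\Gamma_1(a)$, so $p+2$ would divide $2(p+1)/r$), a fixed vertex at $\Phi$-distance $2$ is excluded because $\Phi_1(x)\cap\Phi_1(y)$ would be $g$-invariant and fixed-point-free of size $2p(p+1)/r$, and $\Phi$-distance $3$ reduces to $\Phi$-distance $1$ via the antipodal class structure ($y_3=(r-1)y_1$). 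Hence $\Omega\cap\Phi$ is a single antipodal class and $\Omega=F\cup F'$ is a $2r$-coclique. So to repair your proof you must import Proposition~\ref{P1}(3),(4) and the $1$-homogeneity property and argue inside $\Phi$; the route you sketched (refining residues mod $p+2$ against the crude bound) does not converge to the conclusion.
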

\begin{proof} Let $a\in \Omega\neq \varnothing$ and $\Theta=\Gamma_1(a)$.
If $g\notin K$ (which holds whenever $|g|$ does not divide $r$), then $g$
induces a non-trivial automorphism of the strongly regular graph $\bar{\Gamma}$ 
with parameters 
$$((p+1)^2(p+4)^2/2,(p+2)(p^2+4p+2),p(p+3),2(p+1)(p+2))$$ and non-principal eigenvalues 
  $p,-(p^2+4p+4)$, and by \cite[Theorem 3.2]{BehLam} 
  $$|\Omega|/r\le \frac{2(p+1)(p+2)\cdot v}{k-p}=
  \frac{(p+1)^3(p+2)(p+4)^2}{(p+1)^2(p+4)}={(p+1)(p+2)(p+4)}.$$
 We have $$x_1=|\Omega\cap \Gamma_1(a)|\equiv (p+2)(p^2+4p+2)\pmod {|g|},$$  
 $$x_2=|\Omega\cap \Gamma_2(a)|\equiv 
 \frac{(p^2+4p+2)(p+3)(p+1)r}{2}\pmod {|g|},$$  
 $$x_3=|\Omega\cap \Gamma_3(a)|\equiv (p+2)(p^2+4p+2)(r-1)\pmod {|g|},$$  
 $$x_4=|\Omega\cap \Gamma_4(a)|\equiv r-1\pmod {|g|},$$ 
 $$|\Gamma-\Omega|=v-1-x_1-x_2-x_3-x_4\equiv 0\pmod {|g|},$$
in particular, if $|g|$ does not divide $(p+1)(p+2)(p+3)(p^2+4p+2)r(r-1)$, then $x_1x_2x_3x_4>0$.

Clearly, if $|g|>r$, then the antipodal class containing a vertex $b$ from $\Omega$, is contained in $\Omega$ itself.
 
If $|g|>\max\{\lambda,\mu\}$, then by Proposition~\ref{P1} $|g|>r$ and, consequently, 
  $\Omega$ is an $r$-covering of a graph $\bar{\Omega}$. If moreover  $\Omega$ is connected,
  then $\Omega$ is an antipodal distance-regular graph of diameter 4 with intersection array 
$$\{x_1,(p+3)(p+1)^2,(r-1)2(p+1)(p+2)/r,1;
  1,2(p+1)(p+2)/r,(p+3)(p+1)^2,x_1\}$$ and by \cite[Corollary 4.2.5]{BCN} $p^2(p+3)^2+4x_1$ is a square.
In this case $\bar{\Omega}$ is a strongly regular graph with parameters $(v',x_1,p(p+3),2(p+1)(p+2))$.

Further we will assume that  $p>2$ and $p$ is a prime power. 

Let $|g|=(p+2)^2-2$. 
Then  $|g|>\max\{\lambda,\mu\}$.
Hence $x_3=(r-1)x_1$.
In view of Lemma~\ref{L7}   $g$ induces a non-trivial automorphism of ${\Theta}$, which together with Lemma~\ref{L3} implies $x_1=x_3=0$.
 Since $|g|>\max\{\lambda,\mu\}$,
 $x_2=0$, that is $\Omega$ coincides with the antipodal class $F$. 
 
If  $|g|$ does not divide $(p+2)^2-2$,  then by Lemmas~\ref{L3} and \ref{L7} it follows that either
 $|g|=p+2$, or $|g|\le p$.  

Let $|g|=p+2$. By Proposition~\ref{P1}, $|g|>r$ and $|g|$ does not divide $|\Phi|$.
Hence  $x_2>0$. By Lemma~\ref{L7},  $g$ induces a non-trivial automorphism of ${\Theta}$, which together with Lemma~\ref{L3} implies $x_1=0$.
By applying Lemma~\ref{L7} again, we obtain that $g$ 
induces a non-trivial automorphism of $\bar{\Phi}$, which is strongly
regular graph with parameters $$((p+1)(p+3)(p^2+4p+2)/2,p(p+2)^2,p^2+p-2,2p(p+1))$$
 and non-principal eigenvalues
  $p,-(p^2+2p+2)$. 
Let $x\in \Omega\cap \Phi$.  We have $$y_1=|\Omega\cap \Phi_1(x)|\equiv p(p+2)^2\pmod {|g|},$$  $$y_2=|\Omega\cap \Phi_2(x)|\equiv \frac{(p+2)^2(p+1)^2r}{2}\pmod {|g|},$$  $$y_3=|\Omega\cap \Phi_3(x)|\equiv p(p+2)^2(r-1)\pmod {|g|},$$ $$y_4=|\Omega\cap \Phi_4(x)|=r-1,$$ $$|\Phi-\Omega|=|\Phi|-r-y_1-y_2-y_3\equiv 0\pmod {|g|}.$$ 
If $y_1>0$ (which means that $g$ fixes an edge $\{x,y\}$ of $\Phi$), then by Lemmas~\ref{L3} and \ref{L7}
$x_1=x_3/(r-1)=0$ and the subgraph $\Gamma_1(x)\cap \Gamma_1(y)\cap \Gamma_1(a)$ is $\langle g\rangle$-invariant.
But  $|g|$ does not divide  $2(p+1)/r$, a contradiction.  
Suppose $y_2>0$. Then for a vertex $y\in \Phi_2(x)\cap \Omega$ the subgraph $\Phi_1(x)\cap \Phi_1(y)$ is $\langle g\rangle$-invariant. But  $|g|$ does not divide $2p(p+1)$, a contradiction.
Since $y_3=(r-1)y_1$, we get   $y_1=y_2=y_3=0$, that is $\Omega\cap \Phi$ is an antipodal class of $\Phi$. 
Hence $g$ fixes (pointwise) exactly two antipodal classes of $\Gamma$.
The lemma is proved.
\end{proof}

 \begin{proposition}\label{P2} Let 
$p$ be a prime power, $p>2$ and  $a\in \Gamma$. If the graph $\Gamma$ is arc-transitive and the numbers $(p+2)$ and $(p^2+4p+2)$ are primes, then   $G_a$ is an almost simple group.
\end{proposition}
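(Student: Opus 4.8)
The plan is to identify the socle of the vertex stabiliser $G_a$ and show it is a single nonabelian simple group with trivial centraliser in $G_a$. Write $s=(p+2)^2-2=p^2+4p+2$, so that the local graph $\Theta=\Gamma_1(a)$ is strongly regular on $v=(p+2)(p^2+4p+2)=(p+2)s$ vertices. First I would record the setup: by arc-transitivity $G_a$ is transitive on $\Gamma_1(a)$, and by Lemma~\ref{L7} it acts faithfully on $\Theta$, so $G_a$ is a transitive subgroup of $\tilde G=\mathrm{Aut}(\Theta)$ and $s,p+2\in\pi(G_a)$. Since $s$ and $p+2$ are primes exceeding $p$, while $\pi(G_{a,b})\subseteq\{2,\dots,p\}$ by Theorem~\ref{T1} and $|G_a|=v\,|G_{a,b}|$, both $s$ and $p+2$ divide $|G_a|$ to the first power; in particular $s^2\nmid|G_a|$, $(p+2)^2\nmid|G_a|$, and these are the only prime divisors of $|G_a|$ exceeding $p$. (Note $p$ is odd, else $s$ would be even and greater than $2$.) As a preliminary sanity check, $G_a$ is non-solvable: were it solvable, Lemma~\ref{L6}(2) would force $p+2$ to be either a power of $3$ or composite, both impossible since $p+2$ is a prime $\ge 5$.

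The core is to prove that \emph{every} minimal normal subgroup $N$ of $G_a$ is nonabelian simple with order divisible by $s$ or by $p+2$. Since $N\trianglelefteq G_a$ and $G_a$ is transitive, the $N$-orbits on $\Theta$ are blocks of a common size $d$ dividing $\gcd(|N|,v)=\gcd\bigl(|N|,(p+2)s\bigr)$; faithfulness forces $d>1$, and as $p+2$ and $s$ are the only primes of $v$ this yields $s\mid|N|$ or $(p+2)\mid|N|$. I would next rule out $N$ abelian: such an $N$ would be an elementary abelian $t$-group with $t\in\{s,p+2\}$, hence (first-power divisibility) $N\cong\mathbb{Z}_t$, and then $G_a/C_{G_a}(N)\hookrightarrow\mathrm{Aut}(\mathbb{Z}_t)\cong\mathbb{Z}_{t-1}$. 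Because $p+2\nmid s-1$ and $s\nmid p+1$, in either case the other large prime divides $|C_{G_a}(N)|$, producing an element of order $s$ whose centraliser contains an element of order $p+2$; this is forbidden by Lemma~\ref{L5}, which gives $\pi(C_{G_a}(g))\subseteq\{s\}\cup\pi(p+1)$ for any $g$ of order $s$, and $p+2\notin\{s\}\cup\pi(p+1)$. Hence $N=T^m$ with $T$ nonabelian simple; and $s\mid|N|$ (resp. $(p+2)\mid|N|$) gives $s\mid|T|$ (resp. $(p+2)\mid|T|$), so $s^m\mid|G_a|$ (resp. $(p+2)^m\mid|G_a|$) forces $m=1$.

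For uniqueness I would argue that first-power divisibility allows at most one minimal normal subgroup with $s$ dividing its order and at most one with $p+2$, hence at most two in all. If there were two, the primes $s$ and $p+2$ must be split between them, say $T_1$ with $s\mid|T_1|$, $p+2\nmid|T_1|$ and $T_2$ with $(p+2)\mid|T_2|$, $s\nmid|T_2|$; choosing $g\in T_1$ of order $s$, distinct direct factors commute, so $T_2\le C_{G_a}(g)$, whence $p+2\in\pi(C_{G_a}(g))\subseteq\{s\}\cup\pi(p+1)$ by Lemma~\ref{L5}, a contradiction. Therefore $G_a$ has a unique minimal normal subgroup $T$, a nonabelian simple group. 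Finally $C_{G_a}(T)\trianglelefteq G_a$ meets $T$ trivially and can contain no minimal normal subgroup of $G_a$, so it is trivial; thus $T\cong\mathrm{Inn}(T)\le G_a\le\mathrm{Aut}(T)$ and $G_a$ is almost simple.

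The step I expect to be the main obstacle is the uniqueness argument, namely excluding a socle with two simple factors, one ``carrying'' $s$ and the other ``carrying'' $p+2$. The decisive leverage is Lemma~\ref{L5}: taking a genuine order-$s$ element inside the $s$-factor, it automatically centralises the commuting $(p+2)$-factor, and the constraint $\pi(C_{G_a}(g))\subseteq\{s\}\cup\pi(p+1)$ then collides with $p+2$. It is worth emphasising that this makes the whole argument free of any appeal to the classification of finite simple groups. The points to double-check are that Lemma~\ref{L5} applies (it needs $s\in\pi(\tilde G)$, which holds since $s\mid|G_a|$) and that the block-size argument is valid, which requires only that $s$ and $p+2$ be the sole prime divisors of $v$ and that $G_a$ act faithfully and transitively on $\Theta$.
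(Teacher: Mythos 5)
Your proof is correct, and while it shares the paper's overall skeleton --- pass to the faithful transitive action of $G_a$ on the local graph $\Theta$, analyse minimal normal subgroups, use Lemma~\ref{L5} to forbid an element of order $p+2$ centralising an element of order $s=p^2+4p+2$, and finish with a unique nonabelian simple minimal normal subgroup having trivial centraliser --- it genuinely diverges in the two preparatory steps. First, you obtain the decisive arithmetic constraint (that $s$ and $p+2$ divide $|G_a|$ exactly once) from Theorem~\ref{T1} and orbit--stabiliser, via $|G_a|=(p+2)s\,|G_{a,b}|$ and $\pi(G_{a,b})\subseteq\{2,\dots,p\}$; the paper instead extracts the corresponding non-divisibility statements ($s_1^2\nmid |S(X)|$, $s_1s_2\nmid |Y|$, $s_1^2\nmid |X|$) from the fixed-point counts of Lemmas~\ref{L3} and \ref{L7}, and does so rather tersely. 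Your derivation is cleaner and fully explicit, and it is not circular, since Theorem~\ref{T1} is deduced from Lemmas~\ref{L7} and \ref{L8} alone, independently of Proposition~\ref{P2}. Second, where the paper disposes of solvable normal structure through the solvable radical $S(X)$ --- invoking the Thompson-like characterisation of \cite{GKPS} to get solvability of $\langle f,g\rangle$, then Hall $\{s_1,s_2\}$-subgroups and a coprimality argument to manufacture an element of order $s_1s_2$ contradicting Lemma~\ref{L5} --- you simply rule out an abelian minimal normal subgroup $N\cong\mathbb{Z}_t$, $t\in\{s,p+2\}$, via the embedding $G_a/C_{G_a}(N)\hookrightarrow\mathbb{Z}_{t-1}$ together with $(p+2,s-1)=(p+2,3)=1$ and $s>p+1$, colliding again with Lemma~\ref{L5}. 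This buys a more elementary argument: it avoids Hall theory and the (CFSG-dependent) theorem of \cite{GKPS}, at the cost of not establishing the stronger fact $S(G_a)=1$ that the paper obtains along the way. The remaining steps --- simplicity of each minimal normal subgroup via first-power divisibility, at most one simple factor per large prime, the split-prime configuration killed by commuting elements of orders $s$ and $p+2$, and $C_{G_a}(T)=1$ --- match the paper's essentially verbatim, with your write-up in fact supplying details (the trivial-centraliser step, the block-size argument) that the paper leaves implicit.
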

\begin{proof}  Put $\Theta=\Gamma_1(a)$, $X=G_a$ and $\{s_1,s_2\}=\{p+2, p^2+4p+2\}$. 
Since $X$ is transitive on the vertices of $\Theta$ and $p+2$ is a prime, by Lemma~\ref{L6}   $X$ is non-solvable. 
Denote by $S(X)$ the solvable radical of  $X$. By the same argument,  $S(X)$ is intransitive on the vertices of $\Theta$.

Suppose $S(X)\ne 1$. 
It follows from Lemmas~\ref{L3} and  \ref{L7} that neither of the numbers $s_1^2$ or $s_1s_2$   divides $|S(X)|$.
Hence we may assume that every $S(X)$-orbit on the vertices of $\Theta$ has length $s_1$ and $S(X)$ 
contains an element  $f$ of order $s_1$.  Thus 
for each element $g$ of order $s_2$ of $X$ the group  $\langle f,g\rangle$ is solvable (e.g. see \cite[Theorem 1.1]{GKPS}) and hence it contains a Hall $\{s_1,s_2\}$-subgroup $Y$. Also, by Lemmas~\ref{L3} and \ref{L7}
we get that  $s_i^2$ cannot be a divisor of $|Y|$,
so that $|Y|=s_1s_2$.  Since a subgroup of order $p^2+4p+2$ must be normal in  $Y$ and $(p+2,p^2+4p+1)=1$,  $X$
contains an element of order $s_1s_2$, a contradiction to Lemma~\ref{L5}.

Therefore,  $S(X)=1$ and the socle $soc(X)$ of  $X$ is a direct product of some non-abelian simple  groups. 
Pick a minimal normal subgroup $N$ of $X$. 

Suppose that $N$ is transitive on the vertices of $\Theta$. Then $s_1s_2$ divides $|N|$ and since $X$ 
contains no element of order $s_1s_2$, we conclude that  $N$ is simple.
 
In case, when $N$ is intransitive on the vertices of $\Theta$, by Lemma~\ref{L7} we may assume that each $N$-orbit on the vertices of $\Theta$ has length $s_1$. Then again  $N$ is simple, because  $|X|$ is not divisible by $s_1^2$.

Now if there is a minimal normal subgroup $N_1$ of $X$,  $N_1\ne N$, then   $N_1$ centralizes $N$
and hence  $|X|$ is divisible by $s_i^2$ or $X$ contains an element of order  $s_1s_2$, a contradiction.
Therefore, $soc(X)$ is a non-abelian simple group, which implies that $X$ is almost simple.
This proves the proposition.
\end{proof}

Let us finish  {\it proofs of Theorem~\ref{T1} and Corollary~\ref{C1}.} 
Theorem~\ref{T1} follows immediately from Lemmas \ref{L7} and \ref{L8}. Let $p$ be a prime power and $p>2$. In view of Theorem~\ref{T1} together with Propositions~\ref{P1} and \ref{P2}, it suffices to note that if $\Gamma$ is arc-transitive, 
then  $|G:G_{a,b}|=(p+2)(p^2+4p+2)(p+1)^2(p+4)^2r/2$
for each vertex $b\in \Gamma_1(a)$. Corollary~\ref{C1} is proved.~$\hfill\square$
 \medskip

Finally, we provide a  {\it proof of Theorem~\ref{T2}.}
Suppose $\Gamma$ is an arc-transitive ${\rm AT4}(p,p+2,r)$-graph with $p\in \{3,5, 11,17,27\}$, $a\in \Gamma$
and $G={\rm Aut}(\Gamma)$.  Then  $p+2$ and $p^2+4p+2$ are primes, $p^2+4p+2\in \{23,47,167,359,839\}$  and by Proposition~\ref{P2}, $G_a$ is almost simple. 

If $p=3$, then $\pi(G_a)\subseteq\{2,3,5,23\}$, which contradicts to  \cite[Table 1]{Zav}. Here we note that the
conclusion of Theorem~\ref{T2} for  $p=3$ also  follows from an earlier result \cite[Corollary 2]{MN2}. 
The case $p=5$ was considered in \cite{Tsi19}.

Let $p>5$. Then $p^2+4p+2\ge 100$.

Suppose that $s=p^2+4p+2\in \pi(soc(G_a))$. Then by Lemma \ref{L8} and \cite[Table 2]{Zav},  $soc(G_a)$
is isomorphic to the one of the   groups  $A_s, A_{s+1},\ldots, A_{s'-1}$, where $s'$ is the least prime such that $s'>s$, or to $L_2(s)$. By Lemmas~\ref{L3} and \ref{L7} it follows that the ``alternating'' case is not possible.
Let $soc(G_a)\simeq L_2(s)$. Since $|L_2(s)|=s(s^2-1)/2$ and $(s^2-1,p+2)$ divides 3,  $p+2\in \pi({\rm Out}(L_2(s)))$, a contradiction.

Now suppose $s= p+2\in \pi(soc(G_a))$. Then  $s\in \{13,19,29\}$,  the length of each $soc(G_a)$-orbit on  $\Theta$ equals  $s$ and $p^2+4p+2\in \pi({\rm Out}(soc(G_a)))$. By applying Lemma~\ref{L8}, we get a contradiction with~\cite[Table 1]{Zav} and \cite{atlas}.
Theorem~\ref{T2} is proved.~$\hfill\square$

\end{document}